\title {\bf On the extensions of Di Nola's Theorem}
\author {Michal Botur}
\author {Jan Paseka}
\thanks{Both authors gratefully acknowledge  the support by ESF Project CZ.1.07/2.3.00/20.0051
Algebraic methods in Quantum Logic of the Masaryk University. 
M. Botur gratefully acknowledges Financial Support 
of the  the Grant Agency of the Czech
Re\-pub\-lic un\-der the grant
No.~   GA\v CR P201/11/P346.}
\address{Palack\' y University Olomouc, Faculty of Sciences, t\v r. 17.listopadu 1192/12, Olomouc 771 46, Czech Republic}
\email{michal.botur@upol.cz}
\address{Department of Mathematics and Statistics, Faculty of Science,
Masaryk University, {Kotl\'a\v r{}sk\' a\ 2}, 611~37 Brno, 
Czech Republic}
\email{paseka@math.muni.cz}
\keywords{MV-algebra, ultraproduct, Di Nola's representation Theorem, Farkas' Lemma}
\subjclass[2010]{Primary 06D35, Secondary 03B50}
\begin{document}

\newtheorem{defin}{Definition}
\newtheorem{prop}{Proposition}
\newtheorem{theorem}{Theorem}
\newtheorem{Lemma}{Lemma}
\newtheorem{cor}{Corollary}
\newtheorem{remark}{Remark}
\newtheorem{cl}{Claim}

\newcommand{\zl}{\mbox{$[\hspace{-1.8pt} [$}}
\newcommand{\zr}{\mbox{$]\hspace{-2pt} ]$}}

\begin{abstract}
The main aim of this paper is to present a direct proof of Di Nola's representation Theorem for MV-algebras and to extend 
his results to the restriction of the standard MV-algebra on 
rational numbers. The results are based on a direct proof of the theorem which says that any finite partial subalgebra of a linearly ordered  MV-algebra can be embedded into $\mathbb Q\cap [0,1].$
\end{abstract}

\maketitle

\section*{Introduction}

MV-algebras was introduced by Chang \cite{Chan} as algebraic opposite of \L ukasiewicz multivalued propositional logic. The main idea of his definition is to present logic with truth scale $[0,1]\subseteq\mathbb{R}$ where basic connective disjunction is represented by the cut addition
$$x\oplus y:= \min\{x+y,1\}$$    
and negation is defined as antitone involution by $\neg x:=1-x.$ This model is called as the standard MV-algebra. The \L ukasiewicz multivalued logic (and thus also MV-algebras) became very popular in applications of fuzzy logics for its simplicity and its nature. 

The theory of MV-algebras is highly developed and we have many interesting results and connections with another important parts of mathematics. First, the category of MV-algebras is equivalent with category of commutative $\ell$- groups (lattice ordered commutative groups). 
Second, the variety (or quasivariety) of MV-algebras is generated by its standard model. Consequently, the free algebras are subalgebras of direct sum $[0,1]^X$ of standard MV-algebra and thus the MV-algebras are related with some basic geometrical theories (see \cite{Mun2}). But most of the very deep results are dependent on some of representation theorems. 

The representation theory of MV-algebras is based on 
Chang's representation Theorem \cite{Chan}, McNaughton's Theorem and 
Di Nola's representation Theorem \cite{DiNola}.  Chang's representation Theorem 
yields a subdirect representation of all MV-algebras via linearly ordered MV-algebras. 
McNaughton's Theorem characterizes free MV-algebras as algebras 
of continuous, piece-wise linear functions
with integer coefficients on $[0,1]$. Finally, Di Nola's 
representation Theorem describes MV-algebras as sub-algebras of algebras 
of functions with values into a non-standard ultrapower of
the MV-algebra $[0, 1]$.

The main motivation for our paper comes from the fact that although 
the proofs of both Chang's representation Theorem [4] and  
McNaughton's Theorem are of algebraic nature 
the proof of Di Nola's representation Theorem is based on model-theoretical considerations. 
We give a simple, purely algebraic, proof of it and its variants 
based on the 
Farkas' Lemma for rationals \cite{farkas} and  General finite embedding theorem \cite{Bot}. 
In addition, we present a uniform  version of our results.

\section{Preliminaries}
\subsection{MV-algebras}
Recall, that by an {\it MV-algebra} is meant an algebra $\mathbf A=(A,\oplus,\neg,0)$ of type $(2,1,0)$ satisfying the axioms:
\begin{itemize}
\item[(MV1)] $x\oplus y = y\oplus x,$
\item[(MV2)] $x\oplus (y\oplus z) =(x\oplus y)\oplus z,$
\item[(MV3)] $x\oplus 0=x,$
\item[(MV4)] $\neg\neg x=x,$
\item[(MV5)] $x\oplus 1=1,$ where $1:=\neg 0,$
\item[(MV6)] $\neg(\neg x\oplus y)\oplus y = \neg (\neg y\oplus x)\oplus x.$
\end{itemize} 

The order relation $\leq$ can be introduced on any MV-algebra $\mathbf A$ by 
the stipulation $$x\leq y\mbox{ if and only if } \neg x\oplus y =1.$$ Moreover, the 
ordered set $(A,\leq)$ can be organized into a bounded lattice 
$(A,\vee,\wedge,0,1)$ where $$x\vee y = \neg (\neg x\oplus y)\oplus y \mbox{ and } x\wedge y = \neg(\neg x\vee \neg y).$$

In theory of MV-algebras we use derived operations $\odot$ and $\rightarrow$ defined by $x\odot y:= \neg (\neg x\oplus \neg y)$ and $x\rightarrow y:= \neg x \oplus y.$ Those operations are connected by adjointness property $$x\odot y\leq z\mbox{ if and only if } x\leq y\rightarrow z.$$ 

We recall that a {\em filter} of an MV-algebra  is a non-empty set which contains top element 1, it is closed upwards and under the operation $\odot.$ The filters are just kernels of congruences, the variety of MV-algebras is 1-regular\footnote{There is one-to-one correspondence between kernels and congruences. The lattice of congruences is isomorphic to the lattice of filters.}. A {\em prime filter} is just such a filter $P$ which satisfies $x\rightarrow y\in P$ or $y\rightarrow x\in P$ for any $x$ and $y.$ The kernel of congruence is prime filter if and only if the factor MV-algebra is linearly ordered. We remark that the intersection of all prime filters is just the filter $\{ 1\}$ and thus the MV-algebras are subdirect products of MV-chains (see \cite{Mun}). 
An {\em ultrafilter}  of an MV-algebra is a maximal proper filter.

\subsection{Generalized finite embedding theorem}

By an {\em ultrafilter} on a set $I$ we mean an ultrafilter of the Boolean algebra  $\mathcal P (I)$ of the subsets of $I.$

    Let $\{\mathbf{A}_i;\ i\in I\}$ be a system of algebras of the same type $\mathrm{F}$ for $i\in I$. We denote for any $x,y\in \prod_{i\in I}A_i$ the set

    $$\zl x=y\zr=\{j\in I;x(j)=y(j)\}.$$
    
    \noindent If $F$ is a filter of $\mathcal{P}(I)$ then the relation $\theta_F$ defined by

    $$\theta_F=\{\langle x,y\rangle\in (\prod_{i\in I}A_i)^2; \zl x=y\zr \in F \}$$

    \noindent is a congruence on $\prod_{i\in I}\mathbf{A}_i$. 
    For an ultrafilter $U$ of
    $\mathcal{P}(I)$, an algebra $ (\prod_{i\in I}\mathbf{A}_i)/U := (\prod_{i\in
    I}\mathbf{A}_i)/\theta_U$ is said to be an \textit{ultraproduct} of algebras $\{\mathbf{A}_i;\ i\in I\}$. Any ultraproduct of an algebra $\mathbf A$ is called an ultrapower of $\mathbf A$. The class of all ultraproducts 
(products, isomorphic images) of algebras from some class of 
algebras $\mathcal K$ is denoted by $\mathrm{P_U}(\mathcal K)$ ($\mathrm{P}(\mathcal K)$, $\mathrm{I}(\mathcal K)$). The class of all finite algebras from some class of algebras $\mathcal K$ is denoted by $\mathcal K_{Fin}.$

 \begin{defin}{\rm  Let $\mathbf{A}=(A,\mathrm{F})$ be 
a partial algebra and $X\subseteq A$. Denote
the partial algebra $\mathbf{A}|_X=(X,\mathrm{F})$, where for any $f\in F_n$ and all $x_1,\ldots,x_n\in
        X$, $f^{\mathbf{A}|_X}(x_1,\ldots,x_n)$ is defined if and only if
         $f^{\mathbf{A}}(x_1,\ldots,x_n)\in X$
        holds. Moreover, then we put
        $$
        f^{\mathbf{A}|_X}(x_1,\ldots,x_n) := f^{\mathbf{A}}(x_1,\ldots,x_n).
        $$}
    \end{defin}

\begin{defin}{\rm 
An algebra $\mathbf{A}=(A,\mathrm{F})$ satisfies 
{\em the general finite embedding} ({\em finite embedding property})
{\em property for the class $\mathcal{K}$} of algebras of 
the same type if for any finite subset $X\subseteq A$ 
there are an (finite) algebra $\mathbf{B}\in\mathcal{K}_\mathrm{E}$ 
and an embedding $\rho:\mathbf{A}|_X \hookrightarrow \mathbf{B}$, i.e. an injective mapping $\rho:X\rightarrow B$ satisfying the property $\rho (f^{\mathbf{A}|_X}(x_1,\ldots,x_n))= f^\mathbf{B}(\rho(x_1),\ldots,\rho (x_n))$ if  $x_1,\dots,x_n\in X$, $f\in\mathrm{F}_n$ and $f^{\mathbf{A}|_X}(x_1,\ldots,x_n)$ is defined.}
    \end{defin}

Finite embedding property is usually denoted by (FEP). 
Note also that  a quasivariety ${\mathcal K}$ has the FEP if and only 
if ${\mathcal K}= \mathrm{ISPP_U}(\mathcal{K}_{Fin})$ 
(see \cite[Theorem 1.1]{Blok2} or \cite{Blok}).

    \begin{theorem}\cite[Theorem 6]{Bot}\label{gep}
        Let $\mathbf{A}=(A,\mathrm{F})$ be a  algebra and let
        $\mathcal{K}$ be a class of algebras of the same type. If
        $\mathbf{A}$ satisfies the general finite embedding property for $\mathcal{K}$ then
        $\mathbf{A}\in \mathrm{ISP_U}(\mathcal{K})$.
    \end{theorem}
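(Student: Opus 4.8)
The plan is to realize $\mathbf A$ as a subalgebra of a single ultraproduct whose factors are the finite approximations provided by the hypothesis, indexed by the finite subsets of $A$. First I would let $I$ be the set of all nonempty finite subsets of $A$ and, for each $X\in I$, fix by the general finite embedding property an algebra $\mathbf B_X\in\mathcal K$ together with an embedding $\rho_X\colon\mathbf A|_X\hookrightarrow\mathbf B_X$ (note each $B_X$ is nonempty, since $X\neq\emptyset$). For $a\in A$ write $\hat a=\{X\in I; a\in X\}$. Since $\hat a_1\cap\dots\cap\hat a_n$ contains the element $\{a_1,\dots,a_n\}$ of $I$, the family $\{\hat a; a\in A\}$ has the finite intersection property and is therefore contained in some ultrafilter $U$ on $I$. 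Put $\mathbf C=(\prod_{X\in I}\mathbf B_X)/U$, which by construction belongs to $\mathrm{P_U}(\mathcal K)$.

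Next I would define the candidate embedding $\psi\colon A\to C$. Fixing for each $X$ an arbitrary element of $B_X$ as a default value, set $\phi(a)(X)=\rho_X(a)$ whenever $a\in X$, and let $\phi(a)(X)$ be the default value otherwise; then put $\psi(a)=\phi(a)/U$. Injectivity of $\psi$ is immediate: if $a\neq b$, then for every $X\in\hat a\cap\hat b$ the map $\rho_X$, being injective on $X$, gives $\phi(a)(X)\neq\phi(b)(X)$, so $\zl\phi(a)=\phi(b)\zr$ is disjoint from the set $\hat a\cap\hat b\in U$ and hence is not a member of $U$.

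For the homomorphism property, fix an $n$-ary operation symbol $f$ and elements $a_1,\dots,a_n\in A$, and put $b=f^{\mathbf A}(a_1,\dots,a_n)$. For every $X$ in the set $\hat a_1\cap\dots\cap\hat a_n\cap\hat b\in U$ the elements $a_1,\dots,a_n,b$ all lie in $X$, so $f^{\mathbf A|_X}(a_1,\dots,a_n)$ is defined and equals $b$; applying the defining property of the embedding $\rho_X$ then yields $\phi(b)(X)=f^{\mathbf B_X}(\phi(a_1)(X),\dots,\phi(a_n)(X))$. As this holds on a member of $U$, it passes to the quotient as $\psi(b)=f^{\mathbf C}(\psi(a_1),\dots,\psi(a_n))$. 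Hence $\psi$ embeds $\mathbf A$ into $\mathbf C\in\mathrm{P_U}(\mathcal K)$, which gives $\mathbf A\in\mathrm{ISP_U}(\mathcal K)$.

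Most of this is the routine ultraproduct transfer argument; the point that needs genuine care is the bookkeeping forced by partiality, namely arranging $I$ and $U$ so that every equation we need — injectivity on a given pair, and compatibility with a given operation on a given tuple — holds precisely on one of the sets $\hat a$ or a finite intersection of such sets, which are exactly the sets placed in $U$, while outside those sets the arbitrary default values are harmless because they are averaged away in the quotient. I expect this choice of index set and ultrafilter, together with the check that the partial-operation condition transfers correctly, to be the main (and rather modest) obstacle.
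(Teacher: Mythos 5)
Your proof is correct and follows essentially the same route the paper takes: the paper cites Theorem~\ref{gep} from \cite{Bot} without reproving it, but its proof of the generalization (Theorem~\ref{alphagep}) is exactly your argument --- index by finite subsets, pick embeddings $\rho_X$ with default values elsewhere, generate an ultrafilter from the sets $U(X)=\{Y: X\subseteq Y\}$ (your $\hat a$ and their finite intersections), and verify injectivity and the homomorphism property on sets belonging to $U$. No gaps.
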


\begin{theorem}\cite[Theorem 7]{Bot}
        Let $\mathbf{A}=(A,\mathrm{F})$ be an algebra such that $\mathrm{F}$ is finite and let
        $\mathcal{K}$ be a class of algebras of the same type. If
        $\mathbf{A}\in \mathrm{ISP_U}(\mathcal{K})$
         then  $\mathbf{A}$ satisfies the general finite embedding property for $\mathcal{K}$.
    \end{theorem}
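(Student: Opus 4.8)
The plan is to strip off the operators $\mathrm{I}$, $\mathrm{S}$ and $\mathrm{P_U}$ in turn and then reduce everything to the fact that an ultrafilter is closed under \emph{finite} intersections. Suppose $\mathbf{A}$ is isomorphic to a subalgebra of $\mathbf{C}:=\bigl(\prod_{i\in I}\mathbf{A}_i\bigr)/U$ with each $\mathbf{A}_i\in\mathcal{K}$ and $U$ an ultrafilter on $I$. An isomorphism carries the partial subalgebra $\mathbf{A}|_X$ to an isomorphic partial subalgebra on the image of $X$, and for $X\subseteq A\subseteq C$ one has $\mathbf{A}|_X=\mathbf{C}|_X$ as partial algebras, so it suffices to produce, for an arbitrary finite $X=\{x_1,\dots,x_m\}\subseteq C$, some $\mathbf{B}\in\mathcal{K}$ and an embedding $\rho\colon\mathbf{C}|_X\hookrightarrow\mathbf{B}$. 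For each $k$ fix a representative $\bar x_k\in\prod_{i\in I}A_i$ of the class $x_k$.

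The key step is to record the (finite!) diagram of $\mathbf{C}|_X$ and feed it through the definition of the congruence $\theta_U$. Because $\mathrm{F}$ and $X$ are both finite, there are only finitely many instances $f^{\mathbf{C}}(x_{k_1},\dots,x_{k_n})=x_l$ with $f\in\mathrm{F}_n$ and $x_{k_1},\dots,x_{k_n},x_l\in X$ (the constants being the case $n=0$); for each such instance the construction of the ultraproduct yields $\zl f(\bar x_{k_1},\dots,\bar x_{k_n})=\bar x_l\zr\in U$, the left-hand term being computed coordinatewise. Likewise, for each of the finitely many pairs $k\neq l$ we have $x_k\neq x_l$ in $\mathbf{C}$, whence $I\setminus\zl\bar x_k=\bar x_l\zr\in U$ because $U$ is an ultrafilter. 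Let $S\in U$ be the intersection of all these finitely many members of $U$; then $S\neq\emptyset$, since $\emptyset\notin U$.

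Finally I would pick any $i_0\in S$, put $\mathbf{B}:=\mathbf{A}_{i_0}\in\mathcal{K}$, and define $\rho\colon X\to B$ by $\rho(x_k):=\bar x_k(i_0)$. Membership $i_0\in S$ forces $\rho(x_k)\neq\rho(x_l)$ whenever $k\neq l$, so $\rho$ is injective, and it forces $f^{\mathbf{B}}(\rho(x_{k_1}),\dots,\rho(x_{k_n}))=\rho\bigl(f^{\mathbf{C}|_X}(x_{k_1},\dots,x_{k_n})\bigr)$ whenever the right-hand side is defined; hence $\rho\colon\mathbf{C}|_X\hookrightarrow\mathbf{B}$ is the required embedding with $\mathbf{B}\in\mathcal{K}$, which is exactly the general finite embedding property. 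I do not anticipate any real obstacle: this is the ``easy'' half of the ultraproduct/finite-embedding correspondence. The single point that must not be overlooked is where the finiteness assumptions enter --- the passage from ``each diagram condition holds on a set in $U$'' to ``all of them hold on one common set in $U$'' needs there to be only finitely many conditions, and for a finite $X$ this is guaranteed precisely by the hypothesis that $\mathrm{F}$ is finite; dropping it, the intersection $S$ need no longer belong to $U$.
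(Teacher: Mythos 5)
The paper itself gives no proof of this statement --- it is quoted from \cite[Theorem 7]{Bot} --- so there is nothing internal to compare against. Your argument is correct and is the standard proof of this direction: transport the finite partial algebra along the isomorphism into the ultraproduct, collect the finitely many diagram conditions (finitely many precisely because both $X$ and $\mathrm{F}$ are finite), intersect the corresponding members of the ultrafilter $U$ to get a nonempty $S\in U$, and project to a single coordinate $i_0\in S$. You also correctly identify the two points where the hypotheses do real work: the ultrafilter property (not just the filter property) is needed so that $x_k\neq x_l$ yields the \emph{complement} of the agreement set in $U$, giving injectivity of $\rho$, and the finiteness of $\mathrm{F}$ is needed so that only finitely many sets have to be intersected.
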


\subsection{Farkas' lemma}

Let us  recall the original formulation of Farkas' lemma \cite{farkas, schrijver} on rationals:

\begin{theorem}[Farkas' lemma] \label{Farkas} 
Given a matrix $A$  in ${\mathbb Q}^{m \times n}$ and 
$\bm{c}$ a column vector in ${\mathbb Q}^{m}$, then
there exists  a column vector $\bm{x}\in {\mathbb Q}^{n}$, 
$\bm{x}\geq \bm{0}_n$ and $A\cdot \bm{x}= \bm{c}$ if and only if, 
for all  row vectors $\bm{y}\in {\mathbb Q}^{m}$,  
 $\bm{y}\cdot A \geq  \bm{0}_m$ implies 
$\bm{y}\cdot \bm{c} \geq 0$.
\end{theorem}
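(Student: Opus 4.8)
The plan is to prove the nontrivial implication by contraposition, turning the equality system into a finite system of weak linear inequalities over $\mathbb Q$ and then running Fourier--Motzkin elimination to produce the separating vector $\bm y$ explicitly. The forward implication is immediate: if $\bm x\in\mathbb Q^{n}$ satisfies $\bm x\geq\bm 0_n$ and $A\cdot\bm x=\bm c$, and $\bm y\in\mathbb Q^{m}$ satisfies $\bm y\cdot A\geq\bm 0_n$, then $\bm y\cdot\bm c=\bm y\cdot(A\cdot\bm x)=(\bm y\cdot A)\cdot\bm x$ is a sum of products of nonnegative rationals, hence $\geq 0$. So it remains to show that if no such $\bm x$ exists, then some $\bm y\in\mathbb Q^{m}$ satisfies $\bm y\cdot A\geq\bm 0_n$ while $\bm y\cdot\bm c<0$.

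First I would rewrite ``there is no $\bm x\geq\bm 0_n$ with $A\cdot\bm x=\bm c$'' as the unsolvability of an explicit finite system $(S)$ of weak rational inequalities in the unknowns $x_1,\dots,x_n$: the $2m$ rows $\bm a^{i}\cdot\bm x\leq c_i$ and $-\bm a^{i}\cdot\bm x\leq -c_i$ arising from $A\cdot\bm x=\bm c$ (where $\bm a^{i}$ is the $i$-th row of $A$), together with the $n$ rows $-x_j\leq 0$. The combinatorial ingredient I would invoke is Fourier--Motzkin elimination (see \cite{schrijver}): a finite system of rational weak inequalities is unsolvable over $\mathbb Q$ if and only if some nonnegative rational combination of its rows has identically zero left-hand side and strictly negative right-hand side. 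This is a short self-contained induction and could be reproduced in full; it is the only step that does not look overtly algebraic, though in fact it is entirely elementary and, since every combination it forms is rational, it never leaves $\mathbb Q$ --- which is exactly why the theorem holds verbatim over the rationals.

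Next I would apply the elimination to $(S)$: to remove $x_n$, partition the current rows by the sign of the coefficient of $x_n$, replace each positive/negative pair by the unique nonnegative combination annihilating $x_n$, and retain the rows already free of $x_n$; the resulting system in $x_1,\dots,x_{n-1}$ is solvable iff the previous one was, and each of its rows is, by construction, a nonnegative rational combination of rows of $(S)$. Iterating until no variable remains, unsolvability forces a derived row $0\leq\delta$ with $\delta<0$. Recording the coefficients used --- say $p_i\geq 0$ on $\bm a^{i}\cdot\bm x\leq c_i$, $q_i\geq 0$ on $-\bm a^{i}\cdot\bm x\leq -c_i$, and $r_j\geq 0$ on $-x_j\leq 0$ --- the ``zero left-hand side'' condition reads $\sum_{i}(p_i-q_i)\bm a^{i}=\sum_{j}r_j\bm e_j$ (with $\bm e_1,\dots,\bm e_n$ the standard basis row vectors of $\mathbb Q^{n}$) and the ``negative right-hand side'' condition reads $\sum_{i}(p_i-q_i)c_i<0$. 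Hence the row vector $\bm y:=(p_1-q_1,\dots,p_m-q_m)$ satisfies $\bm y\cdot A=\sum_{j}r_j\bm e_j\geq\bm 0_n$ and $\bm y\cdot\bm c=\sum_{i}(p_i-q_i)c_i<0$, which is precisely the negation of the right-hand condition, completing the contrapositive.

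The main obstacle I anticipate is one of presentation rather than of mathematical depth: making the Fourier--Motzkin step airtight requires careful handling of the edge cases (a variable whose coefficient is everywhere zero, or of a single sign) and a clean invariant ensuring that every derived inequality stays a nonnegative rational combination of the original rows, so that the coefficients $p_i,q_i,r_j$ are genuinely available at the end. If one prefers to keep the paper short, this lemma can simply be cited from \cite{schrijver}, reducing the argument to the two displayed sign conditions and the one-line extraction of $\bm y$.
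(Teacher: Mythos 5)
Your argument is correct, but note that the paper itself offers no proof of this statement at all: Theorem~\ref{Farkas} is recalled from the literature with citations to \cite{farkas} and \cite{schrijver}, and the authors work only with its equivalent reformulation (Theorem~\ref{alt}). So the comparison is between a citation and a self-contained derivation. Your route --- trivial forward direction, then contraposition, encoding $A\cdot\bm x=\bm c$, $\bm x\geq\bm 0_n$ as $2m+n$ weak inequalities and extracting the multipliers $p_i,q_i,r_j$ from Fourier--Motzkin elimination --- is the standard constructive proof and the bookkeeping you display (the conditions $\sum_i(p_i-q_i)\bm a^{i}=\sum_j r_j\bm e_j\geq\bm 0_n$ and $\sum_i(p_i-q_i)c_i<0$, yielding $\bm y=(p_1-q_1,\dots,p_m-q_m)$) is right. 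What your approach buys that the paper's citation does not is an explicit justification of the one feature the paper actually leans on, namely that the lemma holds \emph{verbatim over $\mathbb Q$}: since every combination formed in the elimination is rational, the certificate $\bm y$ is rational, which is exactly what the subsequent Remark (clearing denominators to get $\bm\lambda\in\mathbb Z^{m}$) and the proof of Lemma~\ref{1} require. Two small points: the inequality $\bm y\cdot A\geq\bm 0_m$ in the paper's statement is a typo for $\bm y\cdot A\geq\bm 0_n$ (you silently use the correct dimension), and if you do write out the elimination you should indeed settle the edge cases you flag (a variable absent from all rows, or appearing with only one sign), since otherwise the invariant that every derived row is a nonnegative rational combination of the original rows is not literally established; alternatively, citing \cite{schrijver} for that single lemma keeps your proof as short as the paper's citation while still isolating where rationality enters.
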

In what follows, we will use the following equivalent formulation:

\begin{theorem}[Theorem of alternatives] \label{alt} Let $A$ be a matrix in ${\mathbb Q}^{m \times n}$ and 
$\bm{b}$ a column vector in ${\mathbb Q}^{n}$. The system $A\cdot \bm{x} \leq \bm{b}$ has no solution if and only if there exists
a row vector $\bm{\lambda}\in {\mathbb Q}^{m}$ such that $\bm{\lambda}\geq \bm{0}_m$, 
$\bm{\lambda}\cdot A = \bm{0}_n$ and $\bm{\lambda}\cdot \bm{b} < 0$.
\end{theorem}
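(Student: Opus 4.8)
The plan is to derive the Theorem of alternatives directly from Farkas' lemma by the usual device of homogenising the inequality system into an equality system over the nonnegative orthant. Given $A\in\mathbb{Q}^{m\times n}$ and $\bm{b}\in\mathbb{Q}^{m}$, I would first replace the free vector $\bm{x}\in\mathbb{Q}^n$ by a difference $\bm{x}=\bm{x}^{+}-\bm{x}^{-}$ of two nonnegative rational vectors, and absorb the inequality into a nonnegative slack vector $\bm{s}\in\mathbb{Q}^m$, so that $A\bm{x}\le\bm{b}$ becomes $A\bm{x}^{+}-A\bm{x}^{-}+\bm{s}=\bm{b}$. Setting $\tilde A:=[\,A\mid -A\mid I_m\,]\in\mathbb{Q}^{m\times(2n+m)}$ and $\tilde{\bm{x}}:=(\bm{x}^{+};\bm{x}^{-};\bm{s})$, the original system has a rational solution if and only if $\tilde A\cdot\tilde{\bm{x}}=\bm{b}$ has a solution $\tilde{\bm{x}}\ge\bm{0}_{2n+m}$. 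This equivalence is elementary: from a solution of the enlarged system one recovers $\bm{x}:=\bm{x}^{+}-\bm{x}^{-}$, and conversely from $A\bm{x}\le\bm{b}$ one takes the positive and negative parts of $\bm{x}$ together with $\bm{s}:=\bm{b}-A\bm{x}\ge\bm{0}_m$, all of which stay in $\mathbb{Q}$.

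Next I would feed the system $\tilde A\cdot\tilde{\bm{x}}=\bm{b}$ into Farkas' lemma (Theorem~\ref{Farkas}) with $\bm{c}:=\bm{b}$: it has a nonnegative rational solution if and only if every row vector $\bm{y}\in\mathbb{Q}^m$ with $\bm{y}\cdot\tilde A\ge\bm{0}_{2n+m}$ satisfies $\bm{y}\cdot\bm{b}\ge 0$. The point now is to unwind the condition $\bm{y}\cdot\tilde A\ge\bm{0}_{2n+m}$. Because of the block structure of $\tilde A$, this single condition says simultaneously $\bm{y}A\ge\bm{0}_n$, $-\bm{y}A\ge\bm{0}_n$ and $\bm{y}\ge\bm{0}_m$, that is, precisely $\bm{y}A=\bm{0}_n$ together with $\bm{y}\ge\bm{0}_m$. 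Hence $A\bm{x}\le\bm{b}$ is solvable if and only if for every row vector $\bm{y}\in\mathbb{Q}^m$ with $\bm{y}\ge\bm{0}_m$ and $\bm{y}A=\bm{0}_n$ one has $\bm{y}\bm{b}\ge 0$.

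Finally I would take the contrapositive of this last equivalence: $A\bm{x}\le\bm{b}$ has \emph{no} solution if and only if there is a row vector $\bm{\lambda}\in\mathbb{Q}^m$ with $\bm{\lambda}\ge\bm{0}_m$, $\bm{\lambda}A=\bm{0}_n$ and $\bm{\lambda}\bm{b}<0$, which is exactly the statement of Theorem~\ref{alt}. I do not expect a genuine obstacle here: the whole argument is a bookkeeping reduction, and the only point that deserves a line of care is checking that the auxiliary vectors $\bm{x}^{\pm}$ and $\bm{s}$ can always be chosen rational, so that solvability over $\mathbb{Q}$ is honestly preserved in both directions and Farkas' lemma is legitimately applied in its rational form.
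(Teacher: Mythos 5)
Your argument is correct and complete. Note, however, that the paper offers no proof of Theorem~\ref{alt} at all: it simply asserts that this is an ``equivalent formulation'' of Farkas' lemma, implicitly deferring to \cite{schrijver}. Your reduction --- splitting $\bm{x}=\bm{x}^{+}-\bm{x}^{-}$, introducing a nonnegative slack $\bm{s}=\bm{b}-A\bm{x}$, applying Theorem~\ref{Farkas} to $\tilde A=[\,A\mid -A\mid I_m\,]$ with $\bm{c}=\bm{b}$, unwinding $\bm{y}\tilde A\ge\bm{0}$ to $\bm{y}A=\bm{0}_n$ and $\bm{y}\ge\bm{0}_m$, and taking the contrapositive --- is exactly the standard derivation one would insert here, and your remark that all auxiliary vectors remain rational is the one point that needed saying for the rational version. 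Two small observations: you silently (and correctly) repair the paper's typo placing $\bm{b}$ in $\mathbb{Q}^{n}$ rather than $\mathbb{Q}^{m}$ (likewise $\bm{y}\cdot A\ge\bm{0}_m$ in Theorem~\ref{Farkas} should read $\bm{0}_n$); and it is worth recording that the existence direction of the equivalence (producing $\bm{\lambda}$ from insolvability) is the only direction actually used later, in the proof of Lemma~\ref{1}, though your proof of course gives both.
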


\begin{remark}{\rm Since the row vector $\bm{\lambda}\in {\mathbb Q}^{m}$ from Theorem \ref{alt} has non-negative 
rational components $\lambda_i=\frac{p_i}{q_i}$, $p_i\in \mathbb N_0$, $q_i\in \mathbb N$ 
we may assume (by taking  the least 
common multiple $q$ of denominators $q_i$ and multiplying by it the respective conditions for  $\bm{\lambda}$) that 
$\bm{\lambda} \in {\mathbb Z}^{m}$.}
\end{remark}

\section{The Embedding Lemma}

In this section, we use the Farkas' lemma on rationals to prove 
that any finite partial subalgebra of a linearly ordered  MV-algebra can be 
embedded into $\mathbb Q\cap [0,1]$ and hence into the finite MV-chain 
$\mathbf L_k\subseteq [0,1]$ for a suitable $k\in \mathbb N$.

\begin{Lemma}\label{1}
Let $\mathbf M=(M;\oplus,\neg,0)$ be a linearly ordered MV-algebra, $X\subseteq
M\setminus \{0\}$ 
be a finite subset. Then there is a rationally valued map
$s:X\cup\{0,1\}\longrightarrow [0,1]\cap\mathbb{Q}$ such that
\begin{enumerate}
\item $s(0)=0,$ $s(1)=1,$
\item if $x,y, x\oplus y\in X\cup\{0,1\}$ such that $x\leq \neg y$ and $x,y\in
X\cup\{0,1\}$ then $s(x\oplus y)=s(x)+s(y).$
\item if $x\in X$ then $s(x)>0.$
\end{enumerate}
\end{Lemma}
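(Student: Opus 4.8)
The plan is to encode the required properties of $s$ as a finite system of linear inequalities over $\mathbb{Q}$ in the unknowns $\{s(x) : x \in X\}$ and to apply the Theorem of alternatives (Theorem \ref{alt}) to show that system is solvable. Concretely, list the variables $s(x)$ for $x\in X$ (treating $s(0)=0$, $s(1)=1$ as fixed constants), and write down: for each triple $x,y\in X\cup\{0,1\}$ with $x\le\neg y$ and $x\oplus y\in X\cup\{0,1\}$, the equation $s(x\oplus y)=s(x)+s(y)$ (as two opposite inequalities); for each pair $x\le y$ in $X\cup\{0,1\}$, the inequality $s(x)\le s(y)$; the bounds $0\le s(x)\le 1$; and for each $x\in X$, the strict inequality $s(x)>0$. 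Strict inequalities are handled in the usual way, e.g.\ by introducing a single auxiliary variable $\varepsilon$, replacing $s(x)>0$ with $s(x)\ge \varepsilon$ and $\varepsilon > 0$, or equivalently $\varepsilon\ge\delta$ for a formal $\delta$ and then arguing a posteriori that $\delta$ can be taken rational positive — this is a routine reduction. So everything reduces to showing the non-strict system $A\bm{x}\le\bm{b}$ (with the $\varepsilon$-perturbed right-hand side) has a rational solution.

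Suppose, for contradiction, it does not. By Theorem \ref{alt} and the following Remark there is an \emph{integer} row vector $\bm{\lambda}\ge\bm 0$ with $\bm{\lambda}\cdot A=\bm 0$ and $\bm{\lambda}\cdot\bm b<0$. The point is now to read this certificate of infeasibility back inside the MV-algebra $\mathbf M$ and derive a contradiction from the actual order and additivity relations that hold there. Each coordinate of the system corresponds to a genuine relation in $\mathbf M$: $x\oplus y$ really does equal (the $\oplus$-sum, which when $x\le\neg y$ behaves additively), $x$ really is $\le y$, etc. Because we are in a \emph{linearly ordered} MV-algebra, the partial addition given by $x\oplus y$ when $x\le\neg y$ is the restriction of the group addition in the corresponding $\ell$-group (or can be manipulated directly: $x\le\neg y$ is exactly the condition $x\oplus y = $ the ``honest'' sum, and cancellation $a\oplus b=a\oplus c \wedge \text{(no overflow)} \Rightarrow b=c$ holds). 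So the integer combination $\bm\lambda$ of these relations, when interpreted by adding up copies of elements of $\mathbf M$ (using that $\mathbf M$ embeds into its $\ell$-group, or by induction on the number of summands staying below $1$ using associativity and the overflow condition), yields an identity of the form ``an element of $\mathbf M$ is simultaneously $\ge$ something and $<$ it,'' contradicting antisymmetry of $\le$.

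The main obstacle I expect is precisely this translation step: making rigorous the passage from a formal integer linear combination of inequalities to a valid statement about sums in $\mathbf M$, since $\oplus$ is only \emph{partially} additive and large integer multiples of an element may ``overflow'' past $1$. The cleanest route is to pass to the enveloping lattice-ordered abelian group $G$ of $\mathbf M$ via the Mundici $\Gamma$-correspondence, so that $M=[0,u]\subseteq G$ and $x\oplus y=\min(x+y,u)$, with $x\le\neg y\iff x+y\le u\iff x\oplus y=x+y$. Then all the relations in the system become honest linear relations and inequalities in the torsion-free group $G$ (hence in the totally ordered $\mathbb{Q}$-vector space $G\otimes\mathbb Q$), the integer combination $\bm\lambda$ applies with no side conditions, and $\bm\lambda\cdot\bm b<0$ together with $\bm\lambda\cdot A=\bm 0$ forces $0<0$ in $G\otimes\mathbb Q$ — the desired contradiction. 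I would carry out the steps in this order: (i) set up the variable list and the full inequality system, including the $\varepsilon$-trick for strictness; (ii) embed $\mathbf M$ into $G=[0,u]$; (iii) assume infeasibility, invoke Theorem \ref{alt} plus the Remark to get an integer Farkas certificate; (iv) interpret the certificate in $G\otimes\mathbb Q$ to reach $0<0$; (v) conclude solvability over $\mathbb Q$, and finally clear denominators / rescale into $[0,1]$ to read off $s$.
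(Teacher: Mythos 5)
Your overall strategy is the same as the paper's: encode the required identities as a linear system over $\mathbb{Q}$, invoke the Theorem of alternatives to get an integer certificate of infeasibility, and refute the certificate by interpreting it in the enveloping linearly ordered $\ell$-group $G$ with $M=[0,u]$ (this also correctly disposes of the ``overflow'' worry you flag, exactly as in the paper). However, there is a genuine gap at the one place where the argument actually has to be delicate, namely the strict positivity constraints $s(x)>0$ and the claim that the certificate ``forces $0<0$ in $G\otimes\mathbb{Q}$.'' That conclusion would follow from weak duality only if the \emph{true witness} $\bm{y}=(y_1,\dots,y_n)$ (the actual elements of $G$) satisfied every inequality of your system after the rational constants are reinterpreted in $G$. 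It does not: $G$ may be non-archimedean, so $y_1$ can be infinitesimal with respect to $u$, and hence there is \emph{no} positive rational $\delta_0$ with $y_1\geq \delta_0 u$. Consequently your second reduction ($\varepsilon\geq\delta$ ``for a formal $\delta$,'' fixed a posteriori) cannot be certified by the witness, and your first reduction (keeping $\varepsilon>0$ as a genuinely strict inequality) is no longer an instance of Theorem~\ref{alt}, which handles only non-strict systems; you would need Motzkin's transposition theorem and a re-run of the whole duality analysis. Moreover, if you keep the inhomogeneous upper bounds $s(x)\leq 1$ in the system, the certificate may place weight on them, and the pairing $\bm{\lambda}\cdot A\cdot\bm{y}$ then acquires contributions of the form $+\lambda_i y_{j}$ whose sign works against you; the inequality $\bm{\lambda}\cdot\bm{b}<0$ in $\mathbb{Q}$ does not transfer to the corresponding combination of group elements.

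The paper resolves exactly this point by a different choice of system: the additivity relations are kept as \emph{homogeneous equalities} $A\bm{z}=\bm{0}$ (so they contribute exactly $0$ when paired with $\bm{y}$, since $x\leq\neg y$ gives $x\oplus y=x+y$ in $G$), there are \emph{no} upper bounds, and the only inhomogeneous rows are $z_j\geq 1$ for \emph{every} coordinate, including the one indexing $1\in M$. Then $\bm{\lambda}\cdot\bm{b}<0$ forces some $\lambda_{j_0}>0$ in that block, and the pairing yields $0=(\bm{\lambda}\cdot A')\cdot\bm{y}=-\sum_j\lambda_j y_j$ with $\sum_j\lambda_j y_j$ a strictly positive element of the chain $G$ --- a contradiction that never compares a rational constant with a group element. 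Finally, the map is normalized into $[0,1]\cap\mathbb{Q}$ only afterwards, by dividing the solution by its value $q_n$ at $1$ (legitimate because the additivity constraints are homogeneous). So your proposal needs either this reformulation of the system or a genuine Motzkin-type argument; as written, step (iv) does not go through.
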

\begin{proof}
We put $$Y(X):=\{x\oplus y\mid x,y\in X\cup\{0,1\}\}.$$ 

Thus $Y(X)\subseteq M$ is finite, $X\subseteq Y(X),$ $0,1\in Y(X),$ $X\oplus
X\subseteq Y(X)$. Since $\mathbf M$ is a chain we may assume that
$Y(X)=\{y_0=0<y_1<\cdots<y_n=1\}$ and put 
$\bm{y}=(y_1, \dots, y_n)^{T}\in Y(X)^{n}$. 
For any $x\in X$ there is an index $1\leq j_x\leq
n$ such that $x=y_{j_x}.$ If $x\leq \neg y, x\not=y$ and $x,y\in X$ we denote by
$\mathbf a_{x,y}\in \mathbb{Z}^n$ a row vector such that 
$$ \mathbf a_{x,y}(j) =\left\{
            \begin{array}{rl}
                1 & \mbox{if } j=j_x\ \mbox{or}\ j=j_y\\
                -1 & \mbox{if } j=j_{x\oplus y}\\
                0 & \mbox{otherwise.}
            \end{array}
        \right.$$

If $x\leq \neg x$ and $x\in X$ we denote by 
$\mathbf a_{x,x}\in \mathbb{Z}^n$ a row vector such
that 
$$ \mathbf a_{x,x}(j) =\left\{
            \begin{array}{rl}
                2 & \mbox{if } j=j_x\\
                -1 & \mbox{if } j=j_{x\oplus x}\\
                0 & \mbox{otherwise.}
            \end{array}
        \right.$$
Let $A$ be a matrix consisting of rows $\mathbf a_{x,y}$ 
such that $x\leq \neg y$ and $x,y\in X.$ Let $E_n$ be the 
identity matrix of order $n$. We put $m=|\{\mathbf a_{x,y} \mid x,y\in X, x\leq \neg
y\}|$. Let 
us denote by $(*)$ the following system of linear inequalities with variables
$z_1,\cdots, z_n$ over rationals:
$$\left ( \begin{array}{c}
-E_n\\ \phantom{-}A\\ -A
\end{array} \right )\cdot 
\left( \begin{array}{c} z_1\\ \vdots \\ z_n
\end{array}\right)\leq 
\left( \begin{array}{c}
\mathbf{-1}_n\\ \mathbf 0_m\\ \mathbf{0}_m 
\end{array}\right). \eqno{(*)}
$$

Then by the Farkas' lemma (see Theorem \ref{alt}) for rationals the systems of inequalities $(*)$ does not
have a solution in $\mathbb Q^n$ if and only if there is a row vector
$\bm{\lambda}=(\lambda_1, \dots, \lambda_{n+2m}) \in\mathbb Z^{n+2m}$, 
$\bm{\lambda}\geq \mathbf 0_{n+ 2m}$ such that 
$$
\bm{\lambda}\cdot\left( \begin{array}{c}
-E_n\\ \phantom{-}A\\ -A
\end{array} \right)=0,\, \bm{\lambda} \cdot \left( \begin{array}{c}
\mathbf{-1}_n\\ \mathbf 0_m\\ \mathbf{0}_m 
\end{array}\right) <0. \eqno{(**)}
$$ Assume that the vector $\bm{\lambda}\in\mathbb Z^{n+2m}$ satisfying $(**)$ exists. 
Hence there is an index $1\leq j_0\leq n$ such $\lambda_{j_0}>0.$

Due to Chang's Theorem our linearly ordered MV-algebra $\mathbf M$ is an 
interval $[0,u]$ in a linearly ordered commutative $\ell$-group $\mathbf G$ with 
a strong unit $u.$ If $u, v\in M$ are such that $u\leq \neg v$ 
then the sum $u\oplus v$ coincides with the sum $u+v$ computed in $\mathbf G.$ 
We then have 

$$0 = \bm{\lambda}\cdot\left( \begin{array}{c}
-E_n\\ \phantom{-A}\\ -A
\end{array} \right)\cdot
\left( \begin{array}{c} y_1\\ \vdots \\ y_n
\end{array}\right) = \bm{\lambda}\cdot \left( \begin{array}{c} -\mathbf y \\ \mathbf
0_n \\ \mathbf 0_n
\end{array}\right)=-\sum_{j=1}^n \lambda_j y_j. \eqno{(***)}
$$
Because $\lambda_1, \cdots, \lambda_{n+2m}$ are non-negative and $\lambda_{j_0}$ is
positive,  
moreover $y_1,\cdots, y_n$ are also positive non-zero elements in $\mathbf G,$ we 
get that $\sum_{i=1}^n \lambda_jy_j$ is  a positive non-zero element from $\mathbf G$
which is a contradiction with $(***)$. 

It follows that the system $(*)$ has a rational valued solution $(q_1,\cdots ,q_n)$ 
and from $(*)$ it clearly follows that the solution is  positive (more precisely 
$\mathbf q=(q_1,\cdots,q_n) \geq \mathbf 1_n$). We define the map 
$s:X\cup \{0,1\}\longrightarrow [0,1]\cap\mathbb Q$ by the following prescription:
$$s(x)=\left\{\begin{array}{ll}
{q_{j_x}\over q_n} & \mbox{if } x\in X,\\
0 &\mbox{if } x=0,\\
1 &\mbox{if } x=1.
\end{array}\right.$$
The mapping $s$ evidently satisfies the conditions (1)-(3) of this Lemma.
\end{proof}

\begin{Lemma}[{\bfseries Embedding Lemma}]\label{2}
Let us have a linearly ordered MV-algebra $\mathbf M=(M;\oplus,\neg,0)$ and let
$X\subseteq M$ be a finite set. Then there exists an embedding 
$f:\mathbf X\hookrightarrow \mathbf L_k,$ where $\mathbf X$ is 
a partial MV-algebra obtained by the restriction of\/ $\mathbf M$ to 
the set $X$ and\/ $\mathbf L_k\subseteq [0,1]$ is the
linearly ordered finite MV-algebra on the set $\{0,{1\over k}, {2\over k},\cdots ,
1\}.$ 
\end{Lemma}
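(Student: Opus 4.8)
The plan is to obtain the embedding directly from Lemma~\ref{1}, after first enlarging $X$ to a suitable finite set. Writing $b\ominus a:=\neg(\neg b\oplus a)$ for the truncated difference, set
$$W:=X\cup\{0,1\}\cup\{\neg x:x\in X\}\cup\{\,b\ominus a:\ a,b\in X\cup\{0,1\}\cup\{\neg x:x\in X\},\ a\le b\,\}.$$
Then $W$ is a finite subset of $M$; the essential point is that we add only a single layer of truncated differences, since closing $X$ under $\ominus$ would in general never terminate (consider an irrational ratio inside $[0,1]$), whereas one layer is all that is needed because Lemma~\ref{1} already governs $s$ on every non-truncated sum. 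Now apply Lemma~\ref{1} to the finite set $W\setminus\{0\}\subseteq M\setminus\{0\}$, obtaining a rational map $s\colon W\to[0,1]\cap\mathbb Q$ with $s(0)=0$, $s(1)=1$, $s(w)>0$ for every $w\in W\setminus\{0\}$, and $s(x\oplus y)=s(x)+s(y)$ whenever $x,y,x\oplus y\in W$ and $x\le\neg y$. I claim that $f:=s|_X$ is the required embedding: let $k$ be a common denominator of the finitely many rationals $s(x)$, $x\in X$, so that $f(X)\subseteq\{0,\tfrac1k,\dots,1\}$, and observe that this set is closed under $\oplus$ and $\neg$ in $[0,1]$, i.e.\ it is the universe of the subalgebra $\mathbf L_k$.

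It remains to verify that $f$ is injective and compatible with $\neg$ and with $\oplus$. Compatibility with $\neg$ is immediate: if $x,\neg x\in X$, then the sum $x\oplus\neg x=1$ is non-truncated, as $x\le\neg\neg x=x$, so the additivity clause of Lemma~\ref{1} gives $s(x)+s(\neg x)=s(1)=1$, i.e.\ $s(\neg x)=1-s(x)$. For injectivity I would prove the stronger statement that $s$ is strictly order-preserving on $X\cup\{0,1\}\cup\{\neg x:x\in X\}$: given $a<b$ in that set, put $d:=b\ominus a$, which belongs to $W$ by construction; axiom (MV6) yields $a\oplus d=a\vee b=b$, this sum is non-truncated since $\neg d=\neg b\oplus a\ge a$, and $d\ne 0$ because $b\not\le a$, so $s(d)>0$ and hence $s(b)=s(a)+s(d)>s(a)$. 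In particular $f=s|_X$ is injective.

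For compatibility with $\oplus$, let $x,y\in X$ with $x\oplus y\in X$ and pass to Chang's presentation $\mathbf M=[0,u]\subseteq\mathbf G$. If $x\le\neg y$, then $x\oplus y$ is the non-truncated sum $x+y$ computed in $\mathbf G$, so $s(x\oplus y)=s(x)+s(y)$ by Lemma~\ref{1}, and as this value lies in $[0,1]$ we get $s(x)+s(y)\le 1$, whence $s(x\oplus y)=\min\{s(x)+s(y),1\}=f(x)\oplus f(y)$. If instead $x\not\le\neg y$, then $\neg y<x$ in the chain, so $x\oplus y=u=1$, while the strict monotonicity just established (applied to $\neg y$ and $x$) gives $s(\neg y)<s(x)$, that is $1-s(y)<s(x)$, so $s(x)+s(y)>1$ and again $\min\{s(x)+s(y),1\}=1=s(1)=f(x\oplus y)$.

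The main obstacle is really the first step --- the choice of $W$. One has to enlarge $X$ enough to force injectivity and to handle the truncated case of $\oplus$, while keeping $W$ finite. The key insights are that the identity $a\oplus(b\ominus a)=b$ turns an order comparison into a non-truncated sum that Lemma~\ref{1} can evaluate, that including $\neg X$ lets the truncated $\oplus$-case be read off from monotonicity together with $\neg$-compatibility, and --- crucially --- that a single layer of these new elements, rather than a closure, already suffices. Everything after that is the routine casework indicated above.
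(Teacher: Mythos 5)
Your proposal is correct and follows essentially the same route as the paper: enlarge $X$ by one layer of truncated differences (the paper uses $Y=\{x\ominus y\mid x,y\in X\cup\{0,1\}\}\setminus\{0\}$, which already contains $\neg X$ via $1\ominus y$ and plays the role of your $W$), apply Lemma~\ref{1}, restrict to $X$, and use $a\oplus(b\ominus a)=b$ to handle the truncated case of $\oplus$. Your explicit verification of injectivity via strict order preservation is a welcome addition that the paper's proof leaves implicit.
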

\begin{proof}
Let us define a set $Y$ as follows: 
$$Y:=\{x\ominus y\mid x,y\in X\cup\{0,1\}\}\setminus \{0\}.$$
Moreover, let $s:Y\cup \{0, 1\}\longrightarrow [0,1]\cap\mathbb Q$ be the respective mapping for the set $Y$ 
from Lemma \ref{1}.

Let $f=s|_X$ be the restriction of the mapping $s$ on the set $X.$ 

Let $f(X)\setminus \{0\}=\{{{p_1}\over{q_1}}, \dots,{ {p_l}\over {q_l}} \}$ for some 
$p_1, q_1, \dots, p_l, q_l\in \mathbb N$ and let us denote by 
$k$ the least common multiple of the denominators $q_1, \dots, q_l$. Then evidently 
$f(X)\subseteq \{0,{1\over k}, {2\over k},\cdots , 1\}$.

If $0\in X$ then 
by definition of $s$ we obtain $f(0)=s(0)=0.$ If $x,\neg x\in X$ for 
some $x\in M$ then clearly $\neg x\leq \neg x$ and using Lemma \ref{1} 
we obtain $s(\neg x)+ s(x)=s(\neg x \oplus x)=s(1)=1.$ 
Hence, $f(\neg x)=s(\neg x)=1-s(x)=\neg s(x)=\neg f(x).$ 

Finally, let $x,y\in X$ be such that $x\oplus y\in X$. Then
\begin{enumerate}
\item if $x\leq \neg y$ then $f(x\oplus y)= s(x\oplus y)=s(x) + s(y)= f(x)\oplus f(y).$
\item if $\neg y < x$ then $f(x\oplus y)= f(1)=s(1)=1.$ Conversely, $x\ominus \neg
y\in Y$ and $x\ominus \neg y = y\ominus \neg x\leq y$ and thus $s(x)= s((x\ominus
\neg y)\oplus \neg y) = s(x\ominus \neg y)+ s(\neg y)\geq s(\neg y)=1-s(y).$ 
It follows that $1\geq f(x)\oplus f(y)=\mbox{min}(1, s(x)+s(y))\geq \mbox{min}(1,
(1-s(y))+s(y))=1$.
\end{enumerate}
\end{proof}

\section{Extensions of Di Nola's Theorem}

In this section, we are going to show 
Di Nola's representation Theorem and its several variants not only 
via standard MV-algebra $[0,1]$ but also via its rational part 
$\mathbb Q\cap[0,1]$ and 
finite MV-chains. To prove it, we use the Embedding Lemma obtained in the 
previous section. First, we establish the FEP for 
linearly ordered MV-algebras.

\begin{theorem}\label{T-1}
\begin{enumerate}
\item The class $\mathcal{LMV}$ of linearly ordered MV-algebras has the FEP.
\item  The class $\mathcal{MV}$ of MV-algebras has the FEP.
\end{enumerate}
\end{theorem}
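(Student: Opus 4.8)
The plan is to read both statements off the Embedding Lemma (Lemma \ref{2}). Part (1) is almost immediate, and part (2) is obtained by applying Lemma \ref{2} coordinatewise to a subdirect representation of the given algebra by MV-chains and then retaining only finitely many coordinates.

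For (1) I would fix an arbitrary $\mathbf M \in \mathcal{LMV}$ and a finite $X \subseteq M$; Lemma \ref{2} supplies a $k \in \mathbb N$ and an embedding $f\colon \mathbf M|_X \hookrightarrow \mathbf L_k$, and since $\mathbf L_k$ is a finite linearly ordered MV-algebra, this $f$ is exactly a witness of the general finite embedding property of $\mathbf M$ for $\mathcal{LMV}$. As $\mathbf M$ and $X$ were arbitrary, $\mathcal{LMV}$ has the FEP. For (2) I would start from the fact recalled in Section 1 that every MV-algebra $\mathbf A$ embeds via some $\iota$ into a product $\prod_{i\in I}\mathbf M_i$ of linearly ordered MV-algebras. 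Given a finite $X \subseteq A$, I would first choose a finite subset $I_0 \subseteq I$ large enough that for every pair of distinct $x,x' \in X$ some coordinate $i \in I_0$ separates $\iota(x)$ and $\iota(x')$ (one such $i$ per pair suffices, and there are only finitely many pairs). Next, for each $i \in I_0$ I would put $X_i := \pi_i(\iota(X))$, which is finite, and apply Lemma \ref{2} to obtain $k_i \in \mathbb N$ and an embedding $f_i\colon \mathbf M_i|_{X_i}\hookrightarrow \mathbf L_{k_i}$. Finally I would set $\mathbf B := \prod_{i\in I_0}\mathbf L_{k_i}$, a finite MV-algebra, and define $\rho\colon X \to B$ by $\rho(x) := \bigl(f_i(\pi_i(\iota(x)))\bigr)_{i\in I_0}$.

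It then remains to verify that $\rho$ embeds the partial algebra $\mathbf A|_X$ into $\mathbf B$. Injectivity follows from the choice of $I_0$ together with the injectivity of the maps $f_i$. For preservation of operations, suppose $g \in \mathrm{F}_n$, $x_1,\dots,x_n \in X$ and $y := g^{\mathbf A}(x_1,\dots,x_n) \in X$; since each $\pi_i\circ\iota$ is a homomorphism, $g^{\mathbf M_i}$ sends $\pi_i(\iota(x_1)),\dots,\pi_i(\iota(x_n))$ to $\pi_i(\iota(y))\in X_i$, so $g^{\mathbf M_i|_{X_i}}$ is defined there and the partial embedding $f_i$ carries this equality into $\mathbf L_{k_i}$; doing this for all $i \in I_0$ gives $\rho(y) = g^{\mathbf B}(\rho(x_1),\dots,\rho(x_n))$. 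I do not foresee a genuine obstacle: all the real work is already done inside Lemma \ref{2}, and the only point requiring care is to ensure that $I_0$, and hence $\mathbf B$, is finite, which is automatic since $X$ is finite and only finitely many separations are needed. If desired one may replace $\mathbf B$ by a subalgebra of $\mathbf L_k^{I_0}$ with $k = \mathrm{lcm}\{k_i : i \in I_0\}$, but this refinement is purely cosmetic.
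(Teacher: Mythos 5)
Your proposal is correct and follows essentially the same route as the paper: part (1) is read off directly from Lemma \ref{2}, and part (2) uses a separating finite family of coordinates in a subdirect representation by MV-chains (the paper phrases this as finitely many prime filters $P_1,\dots,P_l$ separating $X$) together with a coordinatewise application of Lemma \ref{2}. The only cosmetic difference is that the paper passes to $\mathbf L_k^l$ with $k=\mathrm{lcm}\{k_i\}$ while you keep $\prod_{i\in I_0}\mathbf L_{k_i}$, which you yourself note is immaterial.
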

\begin{proof}
1) It follows immediately from Lemma \ref{2}.

2) Let $\mathbf M=(M;\oplus,\neg,0)$ be an MV-algebra and let $X\subseteq M$ be a finite subset. For any $x,y\in X; x\not = y$ there is a prime filter $P$ such that $x/P\not= y/P$. Hence there is a finite system of prime filters $P_1,\ldots, P_l$ such that it separates elements from $X,$ i.e., $\mathbf X\hookrightarrow \prod_{i=1}^l (\mathbf M/P_i)$ is an injective mapping. For any $i\in \{1,\ldots, l\}$ there is 
by Lemma  \ref{2} an embedding 
$f_i:\mathbf X/P_i\hookrightarrow \mathbf L_{k_i}.$   Let $k$ be the least common multiple of $k_1,\ldots , k_l.$ Thus, for any $i\in \{1,\ldots, l\}$ there is an embedding $f_i:\mathbf X/P_i\hookrightarrow \mathbf L_{k}.$ Consequently there is an embedding $f:\mathbf X\hookrightarrow (\mathbf L_k)^l$ defined by $f(x)(i)=f_i(x/P_i).$
\end{proof}

Note that the part (1) of the preceding theorem for 
subdirectly irreducible MV-algebras can be 
easily deduced from the result 
that the class of subdirectly irreducible Wajsberg hoops has the FEP 
(see \cite[Theorem 3.9]{Blok}). The well-known part (2) then follows 
from \cite[Lemma 3.7,Theorem 3.9]{Blok}. We are now ready to establish 
a variant of Di Nola's representation Theorem for finite MV-chains 
(finite MV-algebras).

\begin{theorem}\label{T2}
\begin{enumerate}
\item Any linearly ordered MV-algebra can be embedded into an ultraproduct of finite MV-chains.
\item Any MV-algebra can be embedded into a product of ultraproducts of finite MV-chains.
\item Any MV-algebra can be embedded into an ultraproduct of finite MV-algebras (which are embeddable into powers of finite MV-chains).
\end{enumerate}
\end{theorem}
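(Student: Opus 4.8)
The plan is to deduce each item of Theorem~\ref{T2} from the finite embedding properties established in Theorem~\ref{T-1} together with Theorem~\ref{gep}. The guiding observation is that Theorem~\ref{gep} converts a general finite embedding property into membership in $\mathrm{ISP_U}$, and that the partial subalgebras produced by the Embedding Lemma land not merely in the class of all MV-algebras but specifically in finite MV-chains (for item~1) or finite MV-algebras embeddable into powers of finite MV-chains (for item~3). So the work is mostly bookkeeping about which class one feeds into the general finite embedding machinery.

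For item~(1), let $\mathbf M$ be a linearly ordered MV-algebra. By Lemma~\ref{2}, every finite partial subalgebra $\mathbf X = \mathbf M|_X$ embeds into some finite MV-chain $\mathbf L_k$. Thus $\mathbf M$ satisfies the general finite embedding property for the class $\mathcal C$ of finite MV-chains. Since the type of MV-algebras is finite, Theorem~\ref{gep} yields $\mathbf M\in\mathrm{ISP_U}(\mathcal C)$. It remains to collapse $\mathrm{ISP_U}$ to $\mathrm{IP_U}$: because $\mathbf M$ is a chain and a subalgebra of a product of ultraproducts of chains, one pushes $\mathbf M$ into a single ultraproduct. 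Concretely, if $\mathbf M\hookrightarrow \prod_{i\in I}\mathbf C_i$ with each $\mathbf C_i\in\mathrm{P_U}(\mathcal C)$, then for $a\neq b$ in the image there is a coordinate $i$ with $a(i)\neq b(i)$; using that $\mathbf M$ is linearly ordered one selects a single coordinate (or applies the standard fact that a subdirectly irreducible — in particular, a linearly ordered — algebra in $\mathrm{ISP_U}(\mathcal C)$ already lies in $\mathrm{IP_U}(\mathcal C)$, cf. the reduction by an ultrafilter refining the Fréchet-type filter separating points), and finally an ultraproduct of ultraproducts of $\mathcal C$ is again an ultraproduct of members of $\mathcal C$. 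Hence $\mathbf M$ embeds into an ultraproduct of finite MV-chains.

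For item~(2), let $\mathbf M$ be an arbitrary MV-algebra. As recalled in the preliminaries, $\mathbf M$ is a subdirect product of MV-chains $\mathbf M/P$ over its prime filters $P$, so $\mathbf M\hookrightarrow\prod_P \mathbf M/P$. Each $\mathbf M/P$ is linearly ordered, hence by item~(1) embeds into an ultraproduct $\mathbf N_P$ of finite MV-chains. Composing, $\mathbf M\hookrightarrow\prod_P\mathbf N_P$, which is a product of ultraproducts of finite MV-chains. For item~(3), apply the general finite embedding property to the class $\mathcal D$ of finite MV-algebras that embed into powers of finite MV-chains: by Theorem~\ref{T-1}(2) (whose proof exhibits the embedding $\mathbf X\hookrightarrow(\mathbf L_k)^l$), every finite partial subalgebra of $\mathbf M$ embeds into such a $(\mathbf L_k)^l\in\mathcal D$. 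Since the type is finite, Theorem~\ref{gep} gives $\mathbf M\in\mathrm{ISP_U}(\mathcal D)$; as each $(\mathbf L_k)^l$ is itself finite, products and then ultraproducts of members of $\mathcal D$ can be absorbed into a single ultraproduct of finite MV-algebras, and subalgebras are handled by noting $\mathbf M$ embeds into it. The parenthetical clause in the statement records exactly that the finite MV-algebras occurring are of the form $(\mathbf L_k)^l$.

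The main obstacle will be item~(1): passing from $\mathrm{ISP_U}$ of finite chains down to a single ultraproduct of finite chains. The $\mathrm S$ is free, and an ultrapower of an ultraproduct is an ultraproduct, so the real issue is eliminating the outer product $\mathrm P$ when the algebra being represented is itself linearly ordered. I would argue this via the standard ultraproduct lemma: if $\mathbf M$ is an MV-chain and $\mathbf M\hookrightarrow\prod_{i\in I}\prod_{j\in J_i}\mathbf C_{ij}/U_i$, reindex over $K=\bigsqcup_i J_i$ and produce an ultrafilter $U$ on $K$ extending the filter of sets that are "large in cofinitely many coordinates" in a way compatible with the $U_i$; then map $a\in M$ to the class of $(a(i,j))_{(i,j)\in K}$ in $\prod_K\mathbf C_{ij}/U$. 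Injectivity uses that distinct $a,b$ differ on a $U$-large set, which follows from their differing on some coordinate $i$ together with the linearity of $\mathbf M$ forcing the discrepancy to propagate. Care is needed to make $U$ exist and to check the map is a homomorphism; once that is in place, items~(2) and~(3) are immediate corollaries of the same pattern applied coordinatewise.
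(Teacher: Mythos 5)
Your items (2) and (3) follow the paper's route (subdirect decomposition over prime filters followed by item (1); and Theorem \ref{T-1}(2) together with Theorem \ref{gep}, respectively). The trouble is in item (1), where you manufacture an obstacle that does not exist and then resolve it with an argument that is both unnecessary and unsound. In this paper $\mathrm{P_U}(\mathcal K)$ denotes the class of \emph{ultraproducts} of members of $\mathcal K$ — there is no product operator hidden in the notation — so $\mathbf M\in\mathrm{ISP_U}(\mathcal C)$ already says, verbatim, that $\mathbf M$ embeds into a single ultraproduct $(\prod_{i\in I}\mathbf C_i)/U$ of finite MV-chains. There is no outer $\mathrm P$ to eliminate, and the $\mathrm S$ must stay, since ``can be embedded'' means exactly ``is isomorphic to a subalgebra''. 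Hence item (1) is, as the paper states, an immediate consequence of Theorem \ref{T-1}(1) and Theorem \ref{gep}; and the finiteness of the type is not needed for that direction (it is a hypothesis of the converse, \cite[Theorem 7]{Bot}).

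The ``collapse'' you sketch is also wrong on its own terms. You justify passing to a single coordinate by asserting that a linearly ordered MV-algebra is in particular subdirectly irreducible; the implication goes the other way (subdirectly irreducible MV-algebras are chains, but an MV-chain need not be subdirectly irreducible, since its prime spectrum can be an infinite chain of congruences with trivial intersection). And the final construction — an ultrafilter $U$ on $K=\bigsqcup_i J_i$ ``compatible with the $U_i$'' such that any two distinct elements differ on a $U$-large set because ``linearity forces the discrepancy to propagate'' — is not an argument: a disagreement visible at one coordinate $i$ need not spread to a $U$-large subset of $K$, and linearity of $\mathbf M$ plays no role in making it do so. Fortunately none of this is needed: delete the collapse step and your proof of (1) coincides with the paper's. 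The same remark applies to the spurious ``absorption'' step at the end of your item (3): an ultraproduct of the finite algebras $(\mathbf L_k)^l$ is already exactly what item (3) asks for.
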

\begin{proof}
(1) It is a direct corollary of Theorem \ref{T-1}, (1) and Theorem \ref{gep}.

(2) Any MV-algebra is embeddable into a product of linearly ordered ones. The rest follows by (1).

(3) It is a direct corollary of Theorem \ref{T-1}, (2) and Theorem (1).

\end{proof}

The next two theorems cover Di Nola's representation Theorem and 
its respective variants both for 
rationals and reals.

\begin{theorem}\label{T1}
\begin{enumerate}
\item Any linearly ordered MV-algebra can be embedded into an ultrapower of $\mathbb Q\cap[0,1].$
\item Any MV-algebra can be embedded into a product of ultrapowers of $\mathbb Q\cap[0,1].$
\item Any MV-algebra can be embedded into an ultrapower of the countable power of $\mathbb Q\cap[0,1].$
\item  Any MV-algebra can be embedded into an ultraproduct of finite powers of $\mathbb Q\cap[0,1].$
\end{enumerate}
\end{theorem}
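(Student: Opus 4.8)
The plan is to deduce each clause of Theorem~\ref{T1} from Theorem~\ref{T2} by observing that a finite MV-chain $\mathbf L_k$ embeds into $\mathbb Q\cap[0,1]$, and then pushing this embedding through the ultraproduct and product constructions, using the fact that $\mathrm{ISP_U}$ and $\mathrm{ISPP_U}$ operators compose well. For clause (1): by Theorem~\ref{T2}(1) a linearly ordered MV-algebra $\mathbf M$ embeds into $\prod_{i\in I}\mathbf L_{k_i}/U$ for some index set $I$, some finite MV-chains $\mathbf L_{k_i}$, and some ultrafilter $U$ on $I$. Since each $\mathbf L_{k_i}$ is a subalgebra of $\mathbb Q\cap[0,1]$, the product $\prod_{i\in I}\mathbf L_{k_i}$ is a subalgebra of $(\mathbb Q\cap[0,1])^I$, and this embedding is compatible with $\theta_U$, so $\prod_{i\in I}\mathbf L_{k_i}/U$ embeds into $(\mathbb Q\cap[0,1])^I/U$, which is precisely an ultrapower of $\mathbb Q\cap[0,1]$. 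Composing gives the desired embedding of $\mathbf M$.

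For clause (2), I would take an arbitrary MV-algebra $\mathbf A$, embed it subdirectly into a product $\prod_{t\in T}\mathbf M_t$ of MV-chains (Chang's subdirect representation, recalled in the Preliminaries), and apply clause (1) to each $\mathbf M_t$ to embed it into an ultrapower $(\mathbb Q\cap[0,1])^{I_t}/U_t$. Taking the product over $t\in T$ yields an embedding of $\mathbf A$ into $\prod_{t\in T}\big((\mathbb Q\cap[0,1])^{I_t}/U_t\big)$, a product of ultrapowers of $\mathbb Q\cap[0,1]$, as required. Clause (4) is the direct analogue of Theorem~\ref{T2}(3): by that theorem $\mathbf A$ embeds into an ultraproduct $\prod_{i\in I}\mathbf B_i/U$ of finite MV-algebras $\mathbf B_i$, each of which embeds into some finite power $\mathbf L_{k_i}^{n_i}$ of a finite MV-chain, hence into a finite power $(\mathbb Q\cap[0,1])^{n_i}$; again this is compatible with the ultrafilter congruence, so $\mathbf A$ embeds into $\prod_{i\in I}(\mathbb Q\cap[0,1])^{n_i}/U$, an ultraproduct of finite powers of $\mathbb Q\cap[0,1]$.

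Clause (3) is the one requiring slightly more care, since it asserts an embedding into an ultrapower of a \emph{single} algebra, the countable power $(\mathbb Q\cap[0,1])^{\omega}$, rather than into a product of ultrapowers. The natural route is to start from clause (4): $\mathbf A$ embeds into $\prod_{i\in I}(\mathbb Q\cap[0,1])^{n_i}/U$ with each $n_i$ finite, hence each factor $(\mathbb Q\cap[0,1])^{n_i}$ is a subalgebra (a retract, in fact) of $(\mathbb Q\cap[0,1])^{\omega}$; replacing each factor by $(\mathbb Q\cap[0,1])^{\omega}$ preserves the embedding and turns the ultraproduct into the ultrapower $\big((\mathbb Q\cap[0,1])^{\omega}\big)^I/U$. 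Alternatively one may argue from clause (2): a product of at most $|T|$ ultrapowers of $\mathbb Q\cap[0,1]$ embeds, via a standard diagonal/amalgamation argument over a common index set, into a single ultrapower of a sufficiently large power of $\mathbb Q\cap[0,1]$, and one checks that the countable power already suffices because an MV-algebra is generated by countably many of its subdirect MV-chain components in the relevant local sense. The main obstacle is precisely this last bookkeeping: making the index sets uniform so that finitely or countably many ultrapowers collapse into one ultrapower, and verifying that the exponent $\omega$ is enough. I expect the cleanest presentation to route everything through clause (4) and the observation that $(\mathbb Q\cap[0,1])^{n}$ embeds into $(\mathbb Q\cap[0,1])^{\omega}$ for every finite $n$, so that no genuinely new idea beyond Theorem~\ref{T2} and the composability of the class operators $\mathrm I$, $\mathrm S$, $\mathrm P$, $\mathrm{P_U}$ is needed.
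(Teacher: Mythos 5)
Your proposal is correct and fills in exactly the details the paper leaves implicit: the paper's proof of Theorem~\ref{T1} is the single line ``It is a corollary of Theorem~\ref{T2}'', and the intended corollary is precisely your observation that each $\mathbf L_k$ embeds into $\mathbb Q\cap[0,1]$, that $(\mathbf L_k)^l$ and $(\mathbb Q\cap[0,1])^{n}$ embed into $(\mathbb Q\cap[0,1])^{\mathbb N}$, and that componentwise embeddings pass to products and to ultraproducts modulo the same ultrafilter. For clause (3), keep your recommended route through clause (4); the alternative sketch via clause (2) (collapsing a product of ultrapowers into a single ultrapower of the countable power) is unnecessary, and its ``countably many subdirect components'' justification would not survive scrutiny.
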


\begin{proof}
(1)-(4) It is a corollary of Theorem \ref{T2}.
\end{proof}

\begin{theorem}\label{T0}
\begin{enumerate}
\item Any linearly ordered MV-algebra can be embedded into an ultrapower of $[0,1].$
\item Any MV-algebra can be embedded into a product of ultrapowers of $[0,1].$
\item Any MV-algebra can be embedded into an ultrapower of the countable  power of $[0,1].$
\item  Any MV-algebra can be embedded into an ultraproduct of finite powers of $[0,1].$
\end{enumerate}
\end{theorem}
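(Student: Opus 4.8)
The plan is to derive Theorem \ref{T0} from Theorem \ref{T1} by exploiting the elementary fact that $\mathbb{Q}\cap[0,1]$ is a subalgebra of the standard MV-algebra $[0,1]$, together with the observation that the operators $\mathrm{I}$, $\mathrm{S}$, $\mathrm{P}$ and $\mathrm{P_U}$ behave monotonically and compose in a controlled way. Concretely, since $\mathbb{Q}\cap[0,1]\hookrightarrow[0,1]$, every ultrapower of $\mathbb{Q}\cap[0,1]$ embeds into the corresponding ultrapower of $[0,1]$ (ultraproducts are functorial in each coordinate: an embedding $h:\mathbf A\hookrightarrow\mathbf B$ induces an embedding $(\mathbf A^I)/U\hookrightarrow(\mathbf B^I)/U$ by acting coordinatewise, and injectivity is preserved because $\zl h\circ x = h\circ y\zr=\zl x=y\zr$). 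The same remark applies to finite powers and to countable powers, since $(\mathbb{Q}\cap[0,1])^\kappa\hookrightarrow[0,1]^\kappa$ for any cardinal $\kappa$, and this embedding is again transported through the ultraproduct construction.

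First I would record the coordinatewise-embedding lemma explicitly (or cite it as folklore): if $\mathbf A\hookrightarrow\mathbf B$ then $\mathrm{P_U}(\{\mathbf A\})\subseteq\mathrm{ISP_U}(\{\mathbf B\})$, and more generally $\mathrm{P}$ and $\mathrm{P_U}$ preserve the existence of embeddings. Then each clause of Theorem \ref{T0} follows mechanically from the matching clause of Theorem \ref{T1}: for (1), a linearly ordered MV-algebra embeds into an ultrapower of $\mathbb{Q}\cap[0,1]$, which embeds into an ultrapower of $[0,1]$; for (2), compose the embedding into a product of ultrapowers of $\mathbb{Q}\cap[0,1]$ with the product of the induced embeddings into ultrapowers of $[0,1]$; for (3), use $(\mathbb{Q}\cap[0,1])^{\mathbb{N}}\hookrightarrow[0,1]^{\mathbb{N}}$ and transport through the outer ultrapower; for (4), use $(\mathbb{Q}\cap[0,1])^{n}\hookrightarrow[0,1]^{n}$ for each finite $n$ occurring as an index and transport through the ultraproduct.

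Alternatively, and perhaps more in the spirit of the paper, one could bypass Theorem \ref{T1} and run the argument directly from the Embedding Lemma \ref{2}: since every finite MV-chain $\mathbf L_k$ is itself a subalgebra of $[0,1]$, Theorem \ref{gep} (the general finite embedding theorem) applied to the class $\mathcal K=\{[0,1]\}$ — whose finite ``sub-reducts'' include all the $\mathbf L_k$ — yields directly that every linearly ordered MV-algebra lies in $\mathrm{ISP_U}(\{[0,1]\})$, and the remaining clauses follow as in the proof of Theorem \ref{T2}, replacing $\mathbf L_k$ by $[0,1]$ throughout. Either route is essentially bookkeeping.

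I do not expect a genuine obstacle here; the only point requiring a modicum of care is the transport of embeddings through $\mathrm{P_U}$ in clauses (3) and (4), where one must be sure that applying a fixed embedding $(\mathbb{Q}\cap[0,1])^{\kappa}\hookrightarrow[0,1]^{\kappa}$ in every coordinate of the index set and then passing to the ultraproduct still yields a well-defined injective homomorphism — this is immediate from $\zl \cdot=\cdot\zr$ being unchanged under a coordinatewise injection, but it is the one place where an explicit sentence is warranted rather than an appeal to ``clearly''. Everything else is a direct corollary of Theorem \ref{T1} (equivalently of Theorems \ref{T2} and \ref{gep}) combined with the trivial inclusion $\mathbb{Q}\cap[0,1]\subseteq[0,1]$.
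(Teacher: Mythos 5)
Your proposal is correct and matches the paper's approach: the paper's entire proof is the one-line remark that the statement is a corollary of Theorem \ref{T2}, i.e.\ exactly your ``alternative'' route of replacing the finite chains $\mathbf L_k\subseteq[0,1]$ (or $\mathbb Q\cap[0,1]\subseteq[0,1]$) by $[0,1]$ and transporting the embeddings through $\mathrm{P}$ and $\mathrm{P_U}$. Your primary route via Theorem \ref{T1} is only a cosmetic detour, since the paper derives Theorem \ref{T1} from Theorem \ref{T2} by the very same transport argument; your explicit justification that coordinatewise injections preserve $\zl x=y\zr$ is the detail the paper leaves implicit.
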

\begin{proof}
(1)-(4) It is a corollary of Theorem \ref{T2}.
\end{proof}

\section{General finite $\alpha$-embedding property}

This paragraph is devoted to a general finite $\alpha$-embedding 
theorem which is necessary for proving the uniform variants 
of Di Nola's theorem. At first we recall some definitions.


\begin{defin}\cite{Keisler} {\rm Let $\alpha$ be an infinite  
cardinal.  A proper filter 
$D$ over $I$  is said to be
{\em $\alpha$-regular} if there exists a set 
$E \subseteq D$ such that $|E| = \alpha$ and 
each $i \in I$ belongs to only finitely many $e \in E$.}
\end{defin}

\begin{defin} {\rm Let $\alpha$ be an infinite  cardinal, 
$\mathbf{A}=(A,\mathrm{F})$ an algebra such that $|A| \leq \alpha$.  
Let $i_A:A\to \alpha$ be the respective injective mapping.
$\mathbf{A}$ satisfies the {\em general finite 
$\alpha$-embedding} ({\em finite $\alpha$-embedding property})
{\em property for the class $\mathcal{K}$} of algebras of 
the same type if for any finite subset $X\subseteq \alpha$ there 
are an (finite) algebra $\mathbf{B}\in\mathcal{K}$ and an embedding $\rho:\mathbf{A}|_{i_{A}^{-1}(X)}\hookrightarrow\mathbf{B}$, i.e. 
an injective mapping $\rho:{i_{A}^{-1}(X)}\rightarrow B$ 
satisfying the property 
$\rho (f^{{\mathbf{A}|_{i_{A}^{-1}(X)}}}(x_1,\ldots,x_n))= 
f^\mathbf{B}(\rho(x_1),\ldots,\rho (x_n))$ if  
$i_{A}(x_1),\dots,i_{A}(x_n)\in X$, $f\in\mathrm{F}_n$ and $f^{\mathbf{A}|_{i_{A}^{-1}(X)}}(x_1,\ldots,x_n)$ is defined.}
\end{defin}

The following theorem is an extension of \cite[Theorem 6]{Bot} for 
algebras of a bounded cardinality.

  \begin{theorem}\label{alphagep} Let $\alpha$ be an infinite  cardinal and 
   let $\mathbf{A}=(A,\mathrm{F})$ be an algebra such that $|A| \leq \alpha$.  
Let $i_A:A\to \alpha$ be the respective injective mapping. Let
        $\mathcal{K}$ be a class of algebras of the same type. If
        $\mathbf{A}$ satisfies the general finite $\alpha$-embedding 
        property for $\mathcal{K}$ then there is 
        an $\alpha$-regular ultrafilter over the set 
        $I=\{X; X\subseteq \alpha$\ \mbox{and}\ $X$\
        \mbox{is finite}$\}$ which does 
        not depend on $A$ and algebras $\mathbf{A}_X\in  \mathcal{K}$, 
		$X\subseteq \alpha$ finite 
        such that $A$ can be embedded into $(\prod_{X\in I}\mathbf{A}_X)/U$.
    \end{theorem}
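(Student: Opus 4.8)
The plan is to mimic the proof of \cite[Theorem 6]{Bot} (i.e. Theorem \ref{gep}), but carefully keeping track of the index set and producing an ultrafilter that is both $\alpha$-regular and independent of the particular algebra $\mathbf{A}$. First I would fix the index set $I=\{X;\ X\subseteq\alpha,\ X\text{ finite}\}$ once and for all, and for each $X\in I$ set $\widehat{X}=\{Y\in I;\ X\subseteq Y\}$. The family $\{\widehat{X};\ X\in I\}$ has the finite intersection property (since $\widehat{X_1}\cap\cdots\cap\widehat{X_n}=\widehat{X_1\cup\cdots\cup X_n}$), so it is contained in some ultrafilter $U$ over $I$. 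Crucially, $I$ and $U$ depend only on $\alpha$, not on $\mathbf{A}$. The $\alpha$-regularity of $U$ comes for free from this construction: take $E=\{\widehat{\{\xi\}};\ \xi\in\alpha\}\subseteq U$; then $|E|=\alpha$, and a given $Y\in I$ lies in $\widehat{\{\xi\}}$ iff $\xi\in Y$, which happens for only finitely many $\xi$ since $Y$ is finite. So $U$ is $\alpha$-regular, exactly as in the definition of Keisler.

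Next I would use the hypothesis. For each finite $X\subseteq\alpha$, the finite $\alpha$-embedding property for $\mathcal{K}$ supplies an algebra $\mathbf{A}_X\in\mathcal{K}$ together with an embedding $\rho_X:\mathbf{A}|_{i_A^{-1}(X)}\hookrightarrow\mathbf{A}_X$ of the partial algebra. I then build a map $\varphi:A\to\prod_{X\in I}\mathbf{A}_X$ by choosing, for each $a\in A$, values $\varphi(a)(X)$ as follows: if $i_A(a)\in X$, put $\varphi(a)(X)=\rho_X(a)$; otherwise put $\varphi(a)(X)$ equal to some fixed but arbitrary element of $A_X$ (the value on these "small" indices will be irrelevant mod $U$). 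Composing with the canonical surjection onto $(\prod_{X\in I}\mathbf{A}_X)/U$ gives $\bar\varphi:A\to(\prod_{X\in I}\mathbf{A}_X)/U$, and the claim is that $\bar\varphi$ is an embedding of algebras.

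The verification splits into three routine checks, all of the same flavour: for any statement about finitely many elements $a_1,\dots,a_n\in A$, the set of indices $X\in I$ on which the statement "works" contains some $\widehat{X_0}\in U$, hence lies in $U$. For \emph{injectivity}: given $a\neq b$ in $A$, on every $X\supseteq\{i_A(a),i_A(b)\}$ — i.e. on $\widehat{\{i_A(a),i_A(b)\}}\in U$ — we have $\varphi(a)(X)=\rho_X(a)\neq\rho_X(b)=\varphi(b)(X)$ because $\rho_X$ is injective; so $\zl\varphi(a)=\varphi(b)\zr\notin U$, giving $\bar\varphi(a)\neq\bar\varphi(b)$. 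For \emph{preservation of operations}: given $f\in\mathrm{F}_n$ and $a_1,\dots,a_n\in A$ with $b:=f^{\mathbf{A}}(a_1,\dots,a_n)$, on every $X\supseteq\{i_A(a_1),\dots,i_A(a_n),i_A(b)\}$ all the arguments and the value lie in $i_A^{-1}(X)$, so $f^{\mathbf{A}|_{i_A^{-1}(X)}}(a_1,\dots,a_n)$ is defined and equals $b$, whence $\rho_X$ being a partial-algebra embedding gives $\varphi(b)(X)=\rho_X(b)=f^{\mathbf{A}_X}(\rho_X(a_1),\dots,\rho_X(a_n))=f^{\mathbf{A}_X}(\varphi(a_1)(X),\dots,\varphi(a_n)(X))$; this holds on $\widehat{\{i_A(a_1),\dots,i_A(a_n),i_A(b)\}}\in U$, so $\bar\varphi$ commutes with $f$. (Nullary operations are handled the same way with $n=0$.) I expect the main obstacle to be purely bookkeeping: making sure the definition of the partial algebra $\mathbf{A}|_{i_A^{-1}(X)}$ and the $\alpha$-embedding property are applied to \emph{exactly} the right finite subset $i_A^{-1}(X)$ at each index $X$, so that whenever finitely many elements and an operation value all have $i_A$-images in $X$, the partial operation really is defined and preserved — essentially the same subtlety already present in the proof of Theorem \ref{gep}, with the extra twist that here the index set is the poset of finite subsets of $\alpha$ rather than of $A$ itself, which is what simultaneously buys us $\alpha$-regularity and independence of $U$ from $\mathbf{A}$.
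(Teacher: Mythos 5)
Your proposal follows essentially the same route as the paper's proof: the same ultrafilter over $I$ generated by the sets $\widehat{X}=U(X)=\{Y\in I;\ X\subseteq Y\}$, the same $\alpha$-regularity witness $E=\{\widehat{\{\xi\}};\ \xi\in\alpha\}$, the same map $\varphi$ with arbitrary filler values off the relevant indices, and the same injectivity and homomorphism checks via membership of $\widehat{\{i_A(a_1),\dots\}}$ in $U$. It is correct as stated.
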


    \begin{proof}
        Let $\mathbf{A}$ satisfy the general finite $\alpha$-embedding  
       property for $\mathcal{K}$.  Then for any $X\in I$ there are 
        $\mathbf{A}_X\in\mathcal{K}$ and an embedding 
        $\rho_X:\mathbf{A}|_{i_{A}^{-1}(X)}\hookrightarrow \mathbf{A_X}$. 
        By the axiom of choice we choose a fixed $a_X\in A_X$ for 
        any $X\in I$. Now we define a mapping 
        $\varphi:\mathbf{A}\rightarrow \prod_{X\in I}\mathbf{A_X}$ by

        $$ \varphi(a)(X) =\left\{
            \begin{array}{ll}
                \rho_X(a) & \mbox{if } i_{A}(a)\in X\\
                a_X & \mbox{otherwise.}
            \end{array}
        \right.$$

        Denote further $U(X)=\{Y;Y\in I \mbox{ and } X\subseteq Y\}$ 
		and $V=\{U(X);X\in I\}$.
        Then for any $U(X),U(Y)\in V$ the equality 
        $U(X)\cap U(Y) = U(X\cup Y)\in V$ holds and
        thus $U(X)\cap U(Y)\not= \emptyset$. Consequently there 
        is an ultrafilter $U$ of $\mathcal{P}(I)$ such that $V\subseteq U$.

        Let us check that $U$ is $\alpha$-regular. Let us put 
        $E=\{U(\{x\})\mid x\in \alpha\}$. Evidently, $|E|=\alpha$ and, 
        for any $X\in I$ we have that $X\in U(\{x\})$ iff  
        $x\in X$. Therefore any $X\in I$ belongs to only finitely 
        many elements of $E$ because $X$ is a finite subset of $\alpha$.

        Hence, we can define a mapping

        $$ \rho:\mathbf{A}\rightarrow (\prod_{X\in I}\mathbf{A}_X)/U$$
        such that $\rho(a)=\varphi(a)/U$.\par
        {(i) \textit{$\rho$ is injective.}} Let $x,y\in A$ be such 
        that $x\not= y$. It follows that $i_{A}(x)\not= i_{A}(y)$ and,
        for any $X\in U(\{i_{A}(x),i_{A}(y)\})$, we 
        have $\varphi(x)(X)=\rho_X(x)\not=\rho_X(y)=\varphi(y)(X)$.
        Hence $U(\{i_{A}(x),i_{A}(y)\})\subseteq \zl \varphi(x)\not= %
    	\varphi(y)\zr \in U$ and finally 
        $\rho(x)=\varphi(x)/U \not= \varphi(y)/U =\rho(y)$.\par
        {(ii) \textit{$\rho$ is a homomorphism.}} Take 
        $f\in \mathrm{F}_n$ and $x_1,\ldots,x_n\in
        A$ such that  $f^\mathbf{A}(x_1,\ldots,x_n)$ is defined. 
        Then, for any 
        $X\in U(i_{A}(x_1),\ldots, i_{A}(x_n),%
        i_{A}(f^\mathbf{A}(x_1,\ldots,x_n))),$ we have

        \begin{eqnarray*}
        \varphi (f^\mathbf{A}\left(x_1,\ldots,x_n)\right)(X) &=%
        & \rho_X(f^\mathbf{A}(x_1,\ldots,x_n))\\
        &=&f^{\mathbf{A}_X}(\rho_X (x_1),\ldots,\rho_X (x_n))\\
        &=&f^{\mathbf{A}_X}(\varphi (x_1)(X),\ldots,\varphi(x_n)(X))\\        
        &=&f^{{\mathbf{\prod}}_{Y\in I}(\mathbf{A}_Y)}%
(\varphi(x_1),\ldots,\varphi(x_n))(X).
        \end{eqnarray*}
        Hence,

        $$U(x_1,\ldots,x_n,f^\mathbf{A}(x_1,\ldots,x_n))\subseteq $$
        $$\zl \varphi (f^\mathbf{A}(x_1,\ldots,x_n))=%
        f^{\mathbf{\prod}_{Y\in I} (\mathbf{A}_Y)}%
        (\varphi(x_1),\ldots,\varphi(x_n))\zr  \in U$$

        \noindent holds. Now we compute

        \begin{eqnarray*}
         \rho (f^\mathbf{A}(x_1,\ldots,x_n)) &=%
         &\varphi (f^\mathbf{A}(x_1,\ldots,x_n))/U \\
        &=&  f^{\mathbf{\prod}_{Y\in I} (\mathbf{A}_Y)}%
        (\varphi (x_1),\ldots,\varphi(x_n))/U \\
        &=& f^{(\mathbf{\prod}_{Y\in I} (\mathbf{A}_Y))/U}%
        (\varphi (x_1)/U,\ldots,\varphi (x_n)/U) \\
        &=& f^{(\mathbf{\prod}_{Y\in I} (\mathbf{A}_Y))/U}%
        (\rho (x_1),\ldots,\rho(x_n)).
        \end{eqnarray*}

        \noindent This shows that $\rho$ is an embedding into 
         $(\prod_{X\in I}\mathbf{A}_X)/U$.%
    \end{proof}

\section{Representation of MV-algebras by regular ultrapowers}

In this section we present a uniform  version of Di Nola's Theorem for 
rationals.  This enables us to embed all MV-algebras of a cardinality at 
most $\alpha$ in an algebra of functions from $2^{\alpha}$ into
a single non-standard ultrapower of the MV-algebra $\mathbb Q\cap [0, 1]$.
Our second goal is to embed all MV-algebras of a cardinality at 
most $\alpha$  into a single non-standard ultrapower of the MV-algebra $(\mathbb Q\cap [0, 1])^{\mathbb N}$.

       \begin{theorem}\label{ulaps} Let $\alpha$ be an infinite  
cardinal and 
   let $\mathbf M=(M;\oplus,\neg,0)$ be a linearly ordered MV-algebra 
   such that $|M| \leq \alpha$, $U$ be the $\alpha$-regular ultrafilter 
   on the set $I=\{X; X\subseteq \alpha$\ \mbox{and}\ $X$\
        \mbox{is finite}$\}$ from Theorem \ref{alphagep} 
 which does not depend on $\mathbf M$.  Then
       \begin{enumerate}
       \item $\mathbf M$ can be embedded into an ultraproduct of finite 
       MV-chains via the $\alpha$-regular ultrafilter $U$.
\item $\mathbf M$ can be embedded into the ultrapower 
 $(\prod_{X\in I}\mathbb Q\cap[0,1])/U$.\phantom{\huge L}
 \item $\mathbf M$ can be embedded into the ultrapower 
 $(\prod_{X\in I} [0,1])/U$.\phantom{\huge L}
       \end{enumerate}
       \end{theorem}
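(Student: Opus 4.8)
The plan is to reduce everything to the general finite $\alpha$-embedding theorem (Theorem \ref{alphagep}) combined with the Embedding Lemma (Lemma \ref{2}). The crucial point is that Theorem \ref{alphagep} produces a single $\alpha$-regular ultrafilter $U$ on $I=\{X; X\subseteq\alpha,\ X\text{ finite}\}$ that is constructed purely from the combinatorics of $I$ (the sets $U(X)=\{Y\in I; X\subseteq Y\}$), hence does not depend on the particular algebra $\mathbf M$. So the first task is to verify that a linearly ordered MV-algebra $\mathbf M$ with $|M|\le\alpha$ satisfies the general finite $\alpha$-embedding property for the class $\mathcal K$ of finite MV-chains. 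Fix the injection $i_M:M\to\alpha$. Given a finite $X\subseteq\alpha$, the set $i_M^{-1}(X)\subseteq M$ is finite, so by Lemma \ref{2} there is an embedding of the partial algebra $\mathbf M|_{i_M^{-1}(X)}$ into a finite MV-chain $\mathbf L_{k}$; this is exactly the finite $\alpha$-embedding property. Then Theorem \ref{alphagep} yields $\mathbf L$-chains $\mathbf A_X\in\mathcal K$ and an embedding $\mathbf M\hookrightarrow(\prod_{X\in I}\mathbf A_X)/U$ with $U$ the advertised $\alpha$-regular ultrafilter, which proves (1).

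For (2), I would run the same argument but take $\mathcal K$ to be the single algebra $\mathbb Q\cap[0,1]$ (or rather the class consisting of it). Each finite MV-chain $\mathbf L_k$ embeds into $\mathbb Q\cap[0,1]$ as the subalgebra $\{0,\tfrac1k,\dots,1\}$, so the finite $\alpha$-embedding property for the class $\{\mathbb Q\cap[0,1]\}$ follows immediately from Lemma \ref{2}: given finite $X\subseteq\alpha$, embed $\mathbf M|_{i_M^{-1}(X)}$ into some $\mathbf L_k$ and compose with the inclusion $\mathbf L_k\hookrightarrow\mathbb Q\cap[0,1]$. Now every $\mathbf A_X$ in Theorem \ref{alphagep} can be chosen to be the fixed algebra $\mathbb Q\cap[0,1]$, so $(\prod_{X\in I}\mathbf A_X)/U$ is precisely the ultrapower $(\prod_{X\in I}\mathbb Q\cap[0,1])/U$, and the theorem gives the desired embedding. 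The key subtlety to check is that the ultrafilter $U$ delivered in part (2) is the \emph{same} one as in part (1) — this is guaranteed because, as noted above, $U$ depends only on $I$ and on the cofinal system $V=\{U(X);X\in I\}$, not on the target algebras.

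For (3), the identity inclusion $\mathbb Q\cap[0,1]\hookrightarrow[0,1]$ is an MV-embedding, so it induces an embedding of ultrapowers $(\prod_{X\in I}\mathbb Q\cap[0,1])/U\hookrightarrow(\prod_{X\in I}[0,1])/U$; composing with the embedding from (2) gives (3). Equivalently one repeats the proof of (2) with the class $\{[0,1]\}$ in place of $\{\mathbb Q\cap[0,1]\}$.

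The only real obstacle is making sure the uniformity claim is used correctly: one must invoke Theorem \ref{alphagep} with the \emph{same} class $\mathcal K$ for all three parts if one wants literally the same ultrafilter, or else observe (which is the cleaner route) that since $\mathbf L_k\subseteq\mathbb Q\cap[0,1]\subseteq[0,1]$, it suffices to prove (1) once with $\mathcal K$ the class of finite MV-chains, obtain $U$, and then deduce (2) and (3) by post-composing the resulting embedding with the (componentwise, hence quotient-wise) inclusions $\mathbf L_k\hookrightarrow\mathbb Q\cap[0,1]\hookrightarrow[0,1]$. In that formulation there is essentially nothing left to prove beyond citing Lemma \ref{2} and Theorem \ref{alphagep}; the content is entirely in those two results.
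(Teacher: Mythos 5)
Your proposal is correct and follows essentially the same route as the paper, which proves this theorem as a direct corollary of Theorem \ref{T-1}(1) (i.e.\ Lemma \ref{2} giving the finite $\alpha$-embedding property for the class of finite MV-chains) combined with Theorem \ref{alphagep}. Your explicit verification that the ultrafilter $U$ depends only on $I$ and the filter base $V$, and your handling of (2) and (3) by composing with the inclusions $\mathbf L_k\hookrightarrow\mathbb Q\cap[0,1]\hookrightarrow[0,1]$, merely spell out details the paper leaves implicit.
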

       \begin{proof} (1)-(3) It is a direct corollary 
of Theorem \ref{T-1}, (1) and Theorem \ref{alphagep}.
        \end{proof}

     \begin{theorem}\label{ugenaps} Let $\alpha$ be an 
infinite  cardinal and 
   let $\mathbf M=(M;\oplus,\neg,0)$ be an  MV-algebra 
   such that $|M| \leq \alpha$, $U$ be the $\alpha$-regular ultrafilter 
   on the set $I=\{X; X\subseteq \alpha$\ \mbox{and}\ $X$\
        \mbox{is finite}$\}$  from Theorem \ref{alphagep}  
which does not depend on $\mathbf M$.  Then
       \begin{enumerate}
\item $\mathbf M$ can be embedded into an MV-algebra of functions 
from $2^{\alpha}$ to the ultrapower 
 $(\prod_{X\in I}\mathbb Q\cap[0,1])/U$.
 \item $\mathbf M$ can be embedded into an MV-algebra of functions 
from $2^{\alpha}$ to the ultrapower 
 $(\prod_{X\in I} [0,1])/U$.
       \end{enumerate}
       \end{theorem}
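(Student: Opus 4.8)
The plan is to reduce Theorem~\ref{ugenaps} to the linearly ordered case handled in Theorem~\ref{ulaps}, using the standard subdirect representation of MV-algebras by MV-chains together with a uniform bound on the index sets. First I would recall that every MV-algebra $\mathbf M$ is a subdirect product of linearly ordered MV-algebras, namely $\mathbf M \hookrightarrow \prod_{P} (\mathbf M/P)$ where $P$ ranges over the prime filters of $\mathbf M$; since the intersection of all prime filters is $\{1\}$, this map is injective. If $|M|\le \alpha$, then the set of prime filters has cardinality at most $2^{\alpha}$, and each quotient $\mathbf M/P$ is a linearly ordered MV-algebra of cardinality at most $\alpha$. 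Hence we may index the factors by a set of size at most $2^{\alpha}$, replacing it by $2^{\alpha}$ itself after padding (e.g.\ by copies of the trivial or the two-element chain, or simply by repeating factors), so that $\mathbf M$ embeds into $\prod_{\beta \in 2^{\alpha}} \mathbf M_\beta$ with each $\mathbf M_\beta$ a linearly ordered MV-algebra of cardinality $\le \alpha$.

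Next I would apply Theorem~\ref{ulaps} to each factor $\mathbf M_\beta$. The crucial point is that the $\alpha$-regular ultrafilter $U$ on $I=\{X\subseteq\alpha : X \text{ finite}\}$ produced in Theorem~\ref{alphagep} does \emph{not} depend on the particular algebra, only on $\alpha$. Therefore, for part~(1), each $\mathbf M_\beta$ embeds into the \emph{same} ultrapower $R := (\prod_{X\in I}\mathbb Q\cap[0,1])/U$ by Theorem~\ref{ulaps}(2); for part~(2), each $\mathbf M_\beta$ embeds into the same ultrapower $R' := (\prod_{X\in I}[0,1])/U$ by Theorem~\ref{ulaps}(3). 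Taking the product of these embeddings over $\beta\in 2^{\alpha}$ yields an embedding
$$\mathbf M \hookrightarrow \prod_{\beta\in 2^{\alpha}} \mathbf M_\beta \hookrightarrow \prod_{\beta\in 2^{\alpha}} R = R^{2^{\alpha}},$$
and $R^{2^{\alpha}}$ is precisely the MV-algebra of all functions from $2^{\alpha}$ to $R$, with operations computed pointwise. The analogous chain of embeddings with $R'$ in place of $R$ gives part~(2).

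The only genuine subtlety — and the step I expect to require the most care — is the bookkeeping around the index set $2^{\alpha}$: one must verify that the number of prime filters of an MV-algebra of cardinality $\le\alpha$ is at most $2^{\alpha}$ (clear, since prime filters are subsets of $M$), and then that one can honestly re-index the subdirect factors by $2^{\alpha}$ without changing the embedding property; since the product of MV-algebras over a larger index set contains the product over a smaller one (via an embedding sending the extra coordinates to constants, or by simply allowing repeated factors), this is routine. A minor point worth stating explicitly is that because $U$ is fixed once $\alpha$ is fixed, the \emph{same} ultrapower $R$ (resp.\ $R'$) serves simultaneously for every MV-algebra of cardinality at most $\alpha$, which is exactly the uniformity asserted. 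With these observations in place the proof is a direct assembly of Theorem~\ref{ulaps} and the subdirect representation, so I would simply write: ``(1)--(2) Any MV-algebra of cardinality at most $\alpha$ is a subdirect product of at most $2^\alpha$ linearly ordered MV-algebras of cardinality at most $\alpha$; now apply Theorem~\ref{ulaps}, (2) and (3), noting that $U$ does not depend on the chosen chain.''
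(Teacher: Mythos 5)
Your proposal is correct and takes essentially the same route as the paper: subdirect decomposition over the prime filters (of which there are at most $2^{\alpha}$, witnessed in the paper by the injection $F\mapsto i_M(F)\subseteq\alpha$), then Theorem \ref{ulaps} applied to each chain $\mathbf M/F$ with the single $\alpha$-independent ultrafilter $U$, and padding of the index set up to $2^{\alpha}$ by repeating a fixed factor $\mathbf M/F_0$ --- exactly your ``repeating factors'' option. One small caveat on your parenthetical alternatives: padding by sending extra coordinates to a constant, or by inserting trivial/two-element chains, does not literally work (a constant map is not an MV-homomorphism and the trivial algebra admits no homomorphism into $R$), but since your displayed chain of embeddings uses repeated factors this does not affect the argument.
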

       \begin{proof} (1) Let $i_M:M\to \alpha$ be an injective mapping.
       Let $PFilt(\mathbf M)$ be the set of all prime filters 
       of $\mathbf M$ which is evidently non-empty and let 
       $F_0\in PFilt(\mathbf M)$. Then we have by 
       Chang representation Theorem an embedding 
       $$f:{\mathbf M} \hookrightarrow %
       \prod_{F\in PFilt(\mathbf M)} {\mathbf M}/F.$$
       Moreover, we have an injective mapping  
       $e_M:PFilt(\mathbf M)\to 2^{\alpha}$ given by 
       $F\mapsto \{i_M(x) \mid x\in F\}\subseteq \alpha$. 
       For any $F\in PFilt(\mathbf M)$ we have from 
       Theorem \ref{ulaps} an embedding  
       $$g_F:{\mathbf M}/F \hookrightarrow %
       (\prod_{X\in I}\mathbb Q\cap[0,1])/U.$$
       This yields an embedding 
       $$g:\prod_{F\in PFilt(\mathbf M)} {\mathbf M}/F \hookrightarrow 
\left((\prod_{X\in I}\mathbb Q\cap[0,1])/U\right)^{2^{\alpha}}$$ 
given as follows:
$$g((x_F)_{F\in PFilt(\mathbf M)})(B)=\left\{%
\begin{array}{l l} 
       g_F(x_F)&\text{if}\ e_M(F)=B\\
       g_{F_0}(x_{F_0})&\text{otherwise.}\\
       \end{array}
\right.
$$
The composition of $g\circ f$ gives us the required embedding. \\
(2) It follows by the same considerations as in (1).
        \end{proof}

Going the other way around, we have

     \begin{theorem}\label{sugenaps} Let $\alpha$ be an 
infinite  cardinal and 
   let $\mathbf M=(M;\oplus,\neg,0)$ be a  MV-algebra 
   such that $|M| \leq \alpha$, $U$ be the $\alpha$-regular ultrafilter 
   on the set $I=\{X; X\subseteq \alpha$\ \mbox{and}\ $X$\
        \mbox{is finite}$\}$  from Theorem \ref{alphagep}  
which does not depend on $\mathbf M$.  Then
       \begin{enumerate}
\item $\mathbf M$ can be embedded into the ultrapower 
 $\left(\prod_{X\in I}(\mathbb Q\cap[0,1])^{\mathbb N}\right)/U$.
 \item $\mathbf M$ can be embedded into the ultrapower 
 $\left(\prod_{X\in I}[0,1]^{\mathbb N}\right)/U$.\phantom{\huge L}
       \end{enumerate}
       \end{theorem}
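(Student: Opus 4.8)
The plan is to mimic the proof of Theorem \ref{ugenaps}, but to absorb the product over prime filters into the countable power $(\mathbb Q\cap[0,1])^{\mathbb N}$ rather than into an exponent $2^\alpha$ outside the ultrapower. The starting point is again Chang's subdirect representation: every MV-algebra $\mathbf M$ with $|M|\le\alpha$ embeds into $\prod_{F\in PFilt(\mathbf M)}\mathbf M/F$, and each quotient $\mathbf M/F$ is a linearly ordered MV-algebra of cardinality at most $\alpha$, hence by Theorem \ref{ulaps}(2) embeds into the fixed ultrapower $R:=(\prod_{X\in I}\mathbb Q\cap[0,1])/U$. So it suffices to embed $R^{PFilt(\mathbf M)}$ into $\left(\prod_{X\in I}(\mathbb Q\cap[0,1])^{\mathbb N}\right)/U$; but one cannot do this for an arbitrary index set, so first I would cut the index set down to something countable.

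The key reduction is that one does not need \emph{all} prime filters, only enough to separate the elements of a generating (here: all) of $M$ — and more to the point, for the purposes of an embedding it is enough to work one finite subset $Y\subseteq\alpha$ at a time, as in the general finite $\alpha$-embedding machinery of Theorem \ref{alphagep}. For a finite $Y\subseteq\alpha$, the partial subalgebra $\mathbf M|_{i_M^{-1}(Y)}$ is finite, so finitely many prime filters $P_1,\dots,P_r$ (with $r$ bounded in terms of $|Y|$) suffice to separate its elements, exactly as in the proof of Theorem \ref{T-1}(2). Composing the separating map $\mathbf M|_{i_M^{-1}(Y)}\hookrightarrow\prod_{i=1}^r\mathbf M/P_i$ with the embeddings $\mathbf M/P_i\hookrightarrow\mathbf L_{k_i}$ from Lemma \ref{2} and then taking a common $k$, we get a single embedding of $\mathbf M|_{i_M^{-1}(Y)}$ into a finite power $(\mathbf L_k)^r$ of a finite MV-chain, and $(\mathbf L_k)^r$ embeds diagonally (padding the remaining coordinates by $0$, say) into $(\mathbb Q\cap[0,1])^{\mathbb N}$. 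Thus $\mathbf M$ satisfies the general finite $\alpha$-embedding property for the class $\{(\mathbb Q\cap[0,1])^{\mathbb N}\}$, and an application of Theorem \ref{alphagep} — with the \emph{same} $\alpha$-regular ultrafilter $U$ on $I$, which by the statement of Theorem \ref{alphagep} does not depend on $\mathbf M$ — yields an embedding of $\mathbf M$ into $\left(\prod_{X\in I}(\mathbb Q\cap[0,1])^{\mathbb N}\right)/U$, proving (1). Part (2) follows identically, replacing $\mathbb Q\cap[0,1]$ by $[0,1]$ throughout (equivalently, postcomposing with the inclusion $\mathbb Q\cap[0,1]\hookrightarrow[0,1]$).

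The main obstacle — and the only real point requiring care — is that Theorem \ref{alphagep} as stated guarantees an ultrafilter $U$ which "does not depend on $A$", but one must make sure the target algebras $\mathbf A_X\in\mathcal K$ may genuinely be taken to be the single algebra $(\mathbb Q\cap[0,1])^{\mathbb N}$ for \emph{every} finite $X$, so that the ultraproduct collapses to an ultrapower; this is fine because the construction above produces, for each finite $Y\subseteq\alpha$, an embedding into $(\mathbf L_k)^r$ with $r$ depending only on $|Y|$, and $(\mathbf L_k)^r\hookrightarrow(\mathbb Q\cap[0,1])^{\mathbb N}$ uniformly. One should also double-check that the bound on the number $r$ of separating prime filters is uniform in $Y$ (it is: $r\le\binom{|i_M^{-1}(Y)|}{2}$), so that no coordinate-count blow-up forces a genuinely infinite power at any stage — the countable power $\mathbb N$ comfortably accommodates all of them at once. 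With that verified, the proof is a direct corollary of Theorem \ref{T-1}(2) (in its refined finite form, i.e. Lemma \ref{2}), Chang's representation theorem, and Theorem \ref{alphagep}.
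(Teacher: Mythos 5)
Your proof is essentially the paper's own: reduce to a finite partial subalgebra, reuse the argument of Theorem \ref{T-1}(2) to embed it into $(\mathbf L_k)^r$, push that into $(\mathbb Q\cap[0,1])^{\mathbb N}$, and invoke Theorem \ref{alphagep} with the fixed $\alpha$-regular ultrafilter $U$ so the ultraproduct collapses to an ultrapower. The one slip is your embedding $(\mathbf L_k)^r\hookrightarrow(\mathbb Q\cap[0,1])^{\mathbb N}$ by ``padding the remaining coordinates by $0$'': that map does not preserve $\neg$ (the tail of $\neg\bm x$ would be $0$ while the tail of the image of $\bm x$ negated is $1$), so it fails already on any $X$ containing some element together with its negation; the paper instead pads by repeating the first coordinate, which makes every coordinate of the map a projection and hence a genuine MV-embedding.
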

       \begin{proof} (1) Let $i_M:M\to \alpha$ be an injective mapping 
       and let $X\subseteq M$ be a finite subset. Using the same notation 
      and reasonings  as 
       in the proof of Theorem \ref{T-1}, (2) we have an embedding 
       $$f:\mathbf X\hookrightarrow (\mathbf L_k)^l.$$
       Moreover, we have also an embedding 
       $$g:(\mathbf L_k)^l\hookrightarrow (\mathbb Q\cap[0,1])^{\mathbb N}$$ 
       given by:
       $$g((x_k)_{k=1}^{l})(n)=\left\{%
       \begin{array}{l l} 
       x_k&\text{if}\ k=n\\
       x_1&\text{otherwise.}\\
       \end{array}
\right.
$$
The composition $\rho_X=g\circ f$ yields an embedding 
$$\rho_X:\mathbf X \hookrightarrow (\mathbb Q\cap[0,1])^{\mathbb N}.$$ 
The remaining part now follows from Theorem \ref{alphagep}.\\
(2) By the same considerations as in (1).
       \end{proof}

\begin{remark} {\rm Note first that, for a given infinite cardinal $\alpha$, 
$\mathrm{card}%
\left((\prod_{X\in I}\mathbb Q\cap[0,1])/U)^{2^{\alpha}}\right)=%
\aleph_0^{2^{\alpha}}$ and $\mathrm{card}%
\left(\left(\prod_{X\in I}(\mathbb Q\cap[0,1])^{\mathbb N}\right)/U\right)=
2^{\alpha}$.\\[-0.4cm] 

It follows that, for a given infinite cardinal $\alpha$, there is a single 
MV-algebra of the cardinality $2^{\alpha}$ where every MV-algebra of cardinality at most $\alpha$ embeds\\[-0.4cm]

Second, by the same arguments as in \cite[Section 4]{DiNola2}, 
 for every infinite cardinal $\alpha$ there is an iterated 
ultrapower (see  \cite[Section 6.5]{Keisler}) $\prod_{\alpha}$
of $(\mathbb Q \cap[0, 1])^{\mathbb N}$, definable in $\alpha$, 
where every MV-algebra of cardinality at most $\alpha$ embeds.}
\end{remark}


\begin{thebibliography}{99}
\bibitem{Blok}{\sc Blok, W., Ferreirim, I.:} {\it On 
the structure of hoops}, Algebra Universalis {\bf 43} (2000), 
233–-257.
\bibitem{Blok2} {\sc Blok, W., van Alten,~C.\,J.}: 
\textit{On the finite embeddability property for residuated 
ordered groupoids}, Trans. Amer. Math. Soc.  \textbf{357} 
(2005), 4141--4157.
\bibitem{Bot}{\sc Botur, M:} {\it A non-associative generalization of H\'ajeks BL-algebras}, 
Fuzzy Sets and Systems  {\bf 178} (2011), 24--37.
\bibitem{Chan}{\sc Chang, C.C.}: {\it Algebraic analysis of many valued logics.} Trans. Amer. Math. Soc. {\bf 88} (1958), 467--490.
\bibitem{Keisler}{\sc Chang, C.C., Keisler, H. J.:}  
{\it Model Theory}, Elsevier (1973).
\bibitem{DiNola}  {\sc Di Nola, A.:} {\it Representation and reticulation by quotients of MV-algebras},
Ricerche di Matematica {\bf XL} (1991), 291--297.
\bibitem{DiNola2}  {\sc Di Nola,  A., Lenzi, G.,  Spada, L.:} {\it Representation of MV-algebras by regular ultrapowers 
of $[0, 1]$},  Arch. Math. Log. {\bf 49}  (2010), 491--500.
\bibitem{farkas}  {\sc Farkas, G.:}   {\it \"U{}ber die Theorie der einfachen Ungleichungen},  Journal f\"u{}r die Reine
und Angewandte Mathematik {\bf 124} (1902), 1--27.
\bibitem{Mun} {\sc Cignoli, R. L. O., D'Ottaviano, I. M. L., Mundici, D.}:{\it Algebraic Foundations of Many-valued Reasoning}, Kluwer (2000).
\bibitem{Mun2} {\sc Mundici, D.}:{\it Advanced \L ukasiewicz calculus and MV-algebras}, Trends in Logic, Vol. 35 Springer, New York, (2011)
\bibitem{schrijver}  {\sc  Schrijver, A.:} {\it Theory of linear and integer programming}, Wiley-Interscience series
in discrete mathematics and optimization, John Wiley \& sons (1998).
\end{thebibliography}
\end{document}